\pdfoutput=1
\documentclass{article}

\usepackage{PRIMEarxiv}

\usepackage[utf8]{inputenc} 
\usepackage[T1]{fontenc}    
\usepackage{hyperref}       
\usepackage{url}            
\usepackage{booktabs}       
\usepackage{amsfonts}       
\usepackage{nicefrac}       
\usepackage{microtype}      
\usepackage{xcolor}         
\usepackage{fancyhdr}       
\usepackage{float}           
\usepackage{graphicx}       
\graphicspath{{Figures/}{media/}}
\usepackage{subcaption}
\usepackage{amsmath}
\usepackage{amsthm} 
\theoremstyle{plain}
\newtheorem{theorem}{Theorem}[section]
\newtheorem{proposition}[theorem]{Proposition}
\newtheorem{lemma}[theorem]{Lemma}
\newtheorem{hypothesis}[theorem]{Hypothesis}

\theoremstyle{definition}
\newtheorem{definition}[theorem]{Definition}
\newtheorem{remark}[theorem]{Remark}
\title{Multilevel Isogeometric Projection Stabilization via Quasi-Interpolation for Advection-Dominated Problems

}

\author{
  Zakaria El Hasnaoui \\
  Vanguard Center \\
  Mohammed VI Polytechnic University (UM6P) \\
  Rabat, Morocco\\
  \texttt{zakaria.elhasnaoui@um6p.ma} \\
   \And
  Ahmed Ratnani \\
  Vanguard Center \\
  Mohammed VI Polytechnic University (UM6P) \\
  Rabat, Morocco\\
  \texttt{ahmed.ratnani@um6p.ma} \\
}

\begin{document}
\maketitle

\begin{abstract}
This paper presents a novel multilevel projection-based stabilization method for advection-dominated convection--diffusion problems within the framework of Isogeometric Analysis. The proposed approach extracts and penalizes fine-scale fluctuations using continuous B-spline quasi-interpolants, avoiding both the highly sensitive parameters used in residual-based stabilization methods and the discontinuous auxiliary spaces required by classical Local Projection Stabilization. Stabilization is applied hierarchically across nested levels of the discrete space via explicit mesh-dependent weights. We establish the theoretical foundation of the method by deriving \emph{a priori} error estimates, supplemented by a discrete inf-sup condition established for the one-dimensional setting with constant advection under a numerically validated stability hypothesis that ensures robust streamline derivative control. Numerical experiments on stringent benchmarks demonstrate the method's ability to significantly reduce spurious oscillations across a variety of regimes, including the limiting cases of pure advection and advection--reaction. Notably, despite being a fully linear formulation, the method achieves significant reduction of undershoots near sharp layers, providing an effective stabilization mechanism without requiring complex nonlinear terms. Furthermore, by utilizing a robust global parameter scaling, the proposed approach significantly alleviates the parameter sensitivity that typically affects residual-based alternatives, reducing the strong dependence on problem-dependent tuning.
\end{abstract}

\keywords{Isogeometric Analysis \and Stabilization \and Multilevel methods \and Quasi-interpolants \and B-splines \and Advection-dominated problems}

\section{Introduction}

Transport equations, possibly including perturbation terms such as diffusion or reaction, appear deceptively simple at the continuous level. Nevertheless, they play a central role in many physical applications and serve as fundamental model problems for the analysis of nonlinear and more complex flow phenomena. Despite their simple structure, obtaining stable and accurate numerical approximations remains a challenging task \cite{john_finite_2018,Openproblems_ROss}.

A prototypical example is the advection–diffusion–reaction equation. It is well known that the standard Galerkin finite element method exhibits severe instabilities when applied to advection-dominated or advection–reaction problems. These instabilities typically manifest as non-physical oscillations in the numerical solution \cite{HARARI1994165}.

Over the past decades, numerous stabilization techniques have been proposed to overcome these difficulties \cite{hughes_multiscale_2017}. A comprehensive historical overview of the development of stabilized methods can be found in \cite{AUGUSTIN20113395,Hauke-Sangalli-Doweidar}. Broadly speaking, stabilized methods can be classified into two main categories: (1) residual-based stabilization methods, such as Variational Multiscale (VMS), Streamline Upwind Petrov-Galerkin (SUPG), and Galerkin Least-Squares (GLS) \cite{TENEIKELDER2018259,hughes_multiscale_2017}, and (2) penalty or projection-based methods, including Orthogonal Subscales (OSS), Subgrid Viscosity (SVG), and Local Projection Stabilization (LPS) techniques \cite{Braack_Burman,guermond_subgrid_1999,codina_stabilized_2000}.

Residual-based methods are strongly consistent and naturally allow for high-order discretizations. However, they may suffer from drawbacks related to the non-symmetric structure of the stabilization terms and the presence of a tuning parameter. These aspects can complicate the analysis of those methods.

For these reasons, in the present work we adopt the second strategy, focusing on projection-based stabilization methods.

The OSS stabilization strategy provides a systematic way to design high-order stabilized methods. This approach was introduced in \cite{codina_stabilized_2000}, where the classical Galerkin formulation is enriched by adding a projection-based penalty term acting on selected components, typically the gradient or residual terms. However, a major drawback of this classical approach is that the required global $L^2$ projection operator is often computationally expensive to evaluate.

LPS methods overcome this computational difficulty by using element-wise local projections satisfying suitable orthogonality properties, instead of a global projection operator. In these approaches, the effect of unresolved scales on the small resolved scales is modeled through artificial diffusion terms applied locally. Furthermore, LPS methods belong to the class of symmetric stabilization techniques \cite{knobloch_local_2009}. A notable advantage of symmetric stabilization, especially in PDE-constrained optimization problems, is that discretization and optimization commute \cite{knobloch_error_2019}, which simplifies both the theoretical analysis and the numerical implementation.

Classical stabilization techniques such as SUPG and GLS were originally developed within the finite element framework and have been widely and successfully applied to advection- and advection–diffusion-dominated problems. With the emergence of Isogeometric Analysis (IGA), several studies have investigated the integration of SUPG-type stabilization within spline-based discretizations \cite{HUGHES20054135,bazilevs_isogeometric_2006}.

However, it has been consistently observed that merely increasing the approximation order and continuity does not automatically suppress spurious oscillations. In particular, Gibbs-type oscillations, commonly encountered in high-order finite element discretizations, tend to intensify as the polynomial degree and smoothness increase, even when SUPG stabilization is employed \cite{MANNI2011511}.

More generally, numerical investigations indicate that classical stabilized formulations, including SUPG and GLS, perform satisfactorily for convection- or advection–diffusion-dominated problems in the absence of sharp internal or boundary layers. Their performance, however, deteriorates significantly in the presence of steep gradients, where oscillations, excessive smearing, and sensitivity to stabilization parameters frequently arise \cite{JOHN201443}. The tuning of stabilization parameters remains a delicate issue, particularly in high-order and high-regularity discretizations.

Recent developments within IGA have therefore focused on modifying or enhancing classical SUPG-type methods. Several approaches augment the residual weighting or reformulate stabilization terms in order to mitigate known shortcomings in advection– and advection–diffusion–reaction-dominated regimes. In addition, adaptations of the spline approximation space itself such as exponential splines or variable-degree spline constructions have been proposed to better capture sharp gradients and reduce oscillatory behavior.

For instance, the GSC methods introduced in \cite{key_finite_2023} extend stabilization concepts derived from one-dimensional analysis by applying SUPG-type weighting to the advective operator, complemented by residual-based gradient weightings associated with reaction terms. These modifications improve upon classical SUPG by reducing overshoots and undershoots in boundary layer regions, particularly for high-order Lagrange and B-spline discretizations. Nevertheless, as with many residual-based approaches, these methods remain sensitive to stabilization parameter choices, requiring careful tuning to balance accuracy and artificial diffusion.

Similarly, IGA formulations employing exponential or variable-degree spline spaces can substantially reduce oscillations in advection–diffusion problems due to their enhanced capacity to approximate sharp gradients \cite{MANNI2011511}. However, such approaches modify the underlying approximation space, introducing additional problem-dependent parameters and nonstandard basis functions, which may limit general applicability and complicate integration into standard IGA frameworks.

To place the present contribution in its proper context, we now briefly recall the main categories of stabilization methods and clarify the fundamental structural differences with the proposed approach.

\medskip
\noindent
\textbf{(i) Residual-based methods (SUPG/GLS/VMS).}
In Streamline Upwind Petrov--Galerkin (SUPG) and Galerkin Least-Squares (GLS) methods~\cite{brooks_streamline_1982}, and their Variational Multiscale (VMS) generalization~\cite{HUGHES19983}, the stabilization is achieved by testing the discrete equation with a modified test function of the form $v_h + \tau\, \mathcal{L}(v_h)$, where $\mathcal{L}$ is the differential operator (or the streamline derivative $\mathbf{b}\cdot\nabla v_h$). This requires computing the \emph{strong residual} of the PDE, which involves second-order derivatives $\Delta u_h$. While IGA B-splines possess the required inter-element continuity, a well-recognized practical challenge of residual-based methods is the identification and tuning of the stabilization parameter $\tau$, which is sensitive to the problem configuration and for which a robust, universally accepted definition for high-order elements remains elusive. Additionally, these residual evaluations increase the complexity of the formulation and degrade algebraic conditioning. These methods are strongly consistent, but are \emph{non-symmetric}: $s(u_h, v_h) \neq s(v_h, u_h)$.

\medskip
\noindent
\textbf{(ii) LPS.}
LPS methods~\cite{knobloch_local_2009,matthies_unified_2007} achieve symmetric stabilization by penalizing the fluctuation of the \emph{streamline derivative} $\mathbf{b}\cdot\nabla u_h$ that is not captured by a coarse space. The stabilization has the general form
\[
s_{\mathrm{LPS}}(u_h, v_h) = \sum_{M} \tau_M \int_M \kappa_M(\mathbf{b}\cdot\nabla u_h)\, \kappa_M(\mathbf{b}\cdot\nabla v_h)\, dx,
\]
where $\kappa_M = I - \pi_M$ and $\pi_M$ is a local $L^2$ projection onto a \emph{discontinuous} polynomial space of degree $p-1$ defined on macro-elements $M$. This approach avoids computing the residual, but the construction of the discontinuous auxiliary space, the element-wise $L^2$ projection, and the macro-element infrastructure make it difficult to adapt to the highly smooth $C^{p-1}$ setting of Isogeometric Analysis.

\medskip
\noindent
\textbf{(iii) Orthogonal Subscales (OSS).}
The Orthogonal Subscales method, introduced by Codina~\cite{codina_stabilized_2000}, provides a symmetric stabilization within the Variational Multiscale framework by assuming that the unresolved subscales are $L^2$-orthogonal to the finite element space $V_h$. In its term-by-term formulation for the convective term, the stabilization takes the form
\[
s_{\mathrm{OSS}}(u_h, v_h) = \sum_{K} \tau_K \left(\Pi_h^\perp(\mathbf{b} \cdot \nabla u_h),\; \Pi_h^\perp(\mathbf{b} \cdot \nabla v_h)\right)_K,
\]
where $\Pi_h^\perp = I - \Pi_h$ is the $L^2$-orthogonal fluctuation operator and $\Pi_h : L^2(\Omega) \to V_h$ is the global $L^2$-orthogonal projection. Unlike LPS, the projection target is $V_h$ itself (a continuous space), so OSS is in principle compatible with the smooth B-spline setting of Isogeometric Analysis. However, the $L^2$ projection $\Pi_h$ requires solving the mass matrix system $M \boldsymbol{\lambda} = \mathbf{g}$ at each evaluation. For $C^{p-1}$ B-splines, the mass matrix is banded but \emph{not diagonal}, and mass lumping is known to be problematic for high-order, high-continuity splines. The projection is therefore a \emph{global} operation that couples all degrees of freedom through $M^{-1}$. Furthermore, OSS applies the fluctuation operator to $\mathbf{b} \cdot \nabla u_h$, which is a function that \emph{does not belong to $V_h$} (the derivative reduces the spline degree and the multiplication by a variable $\mathbf{b}$ further degrades regularity). The $L^2$ projection of this less regular function back onto the smooth $C^{p-1}$ space may introduce projection artifacts, particularly when the advection field exhibits sharp spatial variations.

\medskip
\noindent
\textbf{(iv) Subgrid viscosity methods (Guermond).}
Guermond's subgrid viscosity method~\cite{guermond_subgrid_1999} decomposes the discrete solution into two components: a \emph{resolved scale} $P_H u_h$ captured by a coarse finite element space $V_H \subset V_h$, and a \emph{subgrid scale} $(I - P_H) u_h$ representing the unresolved high-frequency content. Artificial viscosity is then added proportionally to the subgrid fluctuation, acting on the \emph{full gradient} $\nabla u_h$. A crucial feature of this approach is that the error analysis relies on the \emph{approximation properties of the coarse space} $V_H$ itself, not on those of the projector $P_H$. For this reason, an \emph{oblique projection} suffices: $P_H$ only needs to satisfy $P_H|_{V_H} = \mathrm{Id}$ (i.e., reproduce the coarse space), but it does not need to be an $L^2$-orthogonal projector or possess optimal approximation estimates. The oblique projector merely serves to \emph{separate scales}, not to approximate.

It is worth emphasizing the fundamental role played by the \emph{nature of the projection operator} in the design of stabilization methods. Four structurally distinct projectors appear in the literature:
\begin{enumerate}
\item \textbf{$L^2$-orthogonal projection} $\pi_M$ onto a discontinuous space (LPS): the projector's approximation properties directly enter the consistency error bound; requires solving local mass matrices on macro-elements;
\item \textbf{Global $L^2$-orthogonal projection} $\Pi_h$ onto $V_h$ itself (OSS): the fluctuation operator is applied to the streamline derivative $\mathbf{b} \cdot \nabla u_h$; compatible with IGA but requires a global mass matrix solve;
\item \textbf{Oblique projection} $P_H$ onto a coarse space (Guermond): only separates resolved and subgrid scales; the coarse space controls the error, so no approximation properties are needed from the projector itself;
\item \textbf{Quasi-interpolant} $\Pi_k^p$ onto a continuous B-spline space (present work): an approximation projector that reproduces polynomials of degree $p$, has local support, requires no linear system solve, and enjoys optimal approximation estimates. As in LPS, the projector's approximation order directly controls the consistency error.
\end{enumerate}
The use of a quasi-interpolant as the projection operator is one of the key innovations of the proposed method, as it simultaneously provides optimal approximation order, computational efficiency (no mass matrix inversion), and full compatibility with the smooth B-spline structure of Isogeometric Analysis.

\medskip
\noindent
\textbf{(v) The proposed method.}
The present work introduces a fundamentally different stabilization mechanism that is native to the Isogeometric Analysis framework. The key structural distinctions are:

\begin{itemize}
\item \textbf{Projection of the primal variable.} Unlike both LPS and OSS, which apply the projection operator to the streamline derivative $\mathbf{b}\cdot\nabla u_h$ (a function that generally does not belong to $V_h$), our method applies a quasi-interpolant operator $\Pi_k^p$ to the primal variable $u_h$ \emph{itself}, and then differentiates. This ``project-then-differentiate'' design ensures that the projection operates exclusively on smooth B-spline functions within $V_h$, avoiding the need to project quantities of reduced regularity back onto the discrete space. The fluctuation $u_h - \Pi_k^p u_h$ captures the fine-scale content of $u_h$ that the coarse mesh $\mathcal{M}_h^{(k)}$ cannot resolve.

\item \textbf{Continuous B-spline projection space.} LPS relies on a local $L^2$ projection onto a \emph{discontinuous} polynomial space, which is foreign to IGA. Our method uses a quasi-interpolant onto a \emph{continuous} B-spline space of the same degree $p$, fully exploiting the smooth structure of IGA discretizations.

\item \textbf{Multilevel hierarchy.} Classical LPS, VMS, and subgrid viscosity are all two-level methods: they decompose the solution into resolved and unresolved scales using a single coarse mesh. Our method introduces a true multilevel hierarchy $\mathcal{M}_h^{(1)}, \dots, \mathcal{M}_h^{(L)}$, with each level contributing to the stabilization through its own quasi-interpolant $\Pi_k^p$ and mesh-dependent weight $c_k = h/H^{(k)}$. This multilevel structure provides richer scale decomposition while maintaining optimal convergence.
\end{itemize}

In summary, the proposed method combines the symmetry advantage of projection-based stabilization with the smooth, high-order infrastructure of Isogeometric Analysis, while introducing a novel multilevel scale decomposition that has no counterpart in existing LPS, VMS, or subgrid viscosity frameworks.

The stability of the method is established through coercivity in a natural norm (MQ norm) with a priori error estimates, as well as through a discrete inf-sup condition proven for a specialized one-dimensional setting with a constant advection field under a numerically validated hypothesis that demonstrates explicit control of the full streamline derivative (MQSD norm). Moreover, we provide extensive numerical evidence demonstrating that the proposed stabilization effectively suppresses spurious oscillations in challenging advection-dominated and advection--reaction-dominated benchmarks.

\medskip
\noindent
\textbf{Outline.}
The remainder of this paper is organized as follows. Section~2 introduces the model advection--diffusion--reaction problem, the associated variational formulation, and the standard assumptions on the problem data. Section~3 recalls the construction of univariate and multivariate B-spline spaces and presents the quasi-interpolant operators, together with the key approximation and inverse estimates used throughout the analysis. In Section~4, we formulate the multilevel projection stabilization, define the stabilized bilinear form and the associated energy norms, and establish the general stability and \emph{a priori} error analysis in the MQ norm. Section~5 is devoted to the enhanced inf-sup stability in the MQSD norm: we state the underlying stability hypothesis, validate it numerically via the Bressan--Sangalli dimension condition, and prove the discrete inf-sup condition for the one-dimensional constant-advection setting. Section~6 presents an extensive suite of numerical experiments including advection--reaction, advection--diffusion, parabolic boundary layers, internal layers, rotational flow, and pure advection benchmarks that demonstrate the robustness and accuracy of the proposed method. Finally, Section~7 summarizes the main contributions and discusses open problems and future research directions.

\section{Model Problem and Assumptions}

Let $\Omega \subset \mathbb{R}^d$, $d \ge 1$, be a bounded domain with polyhedral Lipschitz boundary $\partial \Omega$. 
We study the convection--diffusion--reaction problem
\begin{equation}
\label{ADR_equa}
-\varepsilon \Delta u + \mathbf{b} \cdot \nabla u + c\, u = f \quad \text{in } \Omega, 
\qquad 
u = 0 \quad \text{on } \partial \Omega,
\end{equation}
where $\varepsilon>0$ is the diffusion coefficient, $\mathbf{b}$ is the convection field, $c$ is the reaction coefficient, and $f$ is a given source term.  

We assume the data satisfy
\begin{equation*}
\mathbf{b} \in W^{1,\infty}(\Omega)^d, \quad 
c \in L^\infty(\Omega), \quad
f \in L^2(\Omega),
\end{equation*}
and that the reaction term is coercive, i.e.,
\begin{equation}
\sigma := c - \frac{1}{2} \operatorname{div} \mathbf{b} \ge \sigma_0 > 0,
\end{equation}
for some constant $\sigma_0$.  
Under these conditions, problem \eqref{ADR_equa} admits a unique solution $u \in H^1(\Omega)$.

Then, the weak solution of problem \eqref{ADR_equa} is a function $u \in H^1_0(\Omega)$ such that 

\begin{equation}    
\label{VF_equa}
a(u, v)=(f, v) \quad \forall v \in H^1_0(\Omega),
\end{equation}

where
\begin{equation}  
a(u, v)=\varepsilon(\nabla u, \nabla v)+(\boldsymbol{b} \cdot \nabla u, v)+(c u, v) .
\end{equation}

In this work, we focus on the numerical approximation of \eqref{VF_equa}. Accordingly, let \(V_h\) be a finite-dimensional approximation space of \(H_0^1(\Omega)\). 

The discrete variational scheme for \eqref{VF_equa} reads as follows: Find \(u_h \in V_h\), such that
\begin{equation}
\label{DVF_equa}
a(u_h, v_h) = (f, v_h), \quad \forall v_h \in V_h.
\end{equation}
It is well known (see \cite{key_finite_2023,knobloch_local_2009,hauke_exploring_2004}) that the standard Galerkin finite element method is generally unsuitable for the numerical approximation of problem \eqref{DVF_equa},
as the discrete solution often exhibits nonphysical oscillations when convection dominates diffusion, i.e., when $\|\mathbf{b}\| \gg \varepsilon$.

In this work, we focus on stabilization using a projection-based method with B-splines, 
relying on a quasi-interpolant operator \cite{lyche_b-splines_2017,lee_examples_2001} in a multilevel setting. 
A similar approach was previously introduced for classical finite elements in \cite{knobloch_local_2009,he_two-level_2012,knobloch_generalization_2010,barrenechea_blending_2017,matthies_local_2015,matthies_unified_2007}, 
where a local projection onto a discontinuous coarse space of degree $p-1$ was employed. 
In that context, a tunable parameter, defined piecewise on each coarse element, was used, 
and the projection was typically taken as the local $L^2$ projection.  

In contrast, our method employs a multilevel quasi-interpolant operator onto a B-spline space of degree $p$ over a hierarchy of coarse meshes. 
In the following sections, we introduce the B-spline basis and the construction of the quasi-interpolant operator in detail.

\section{B-splines and Quasi-Interpolant}
\subsection{B-splines and discrete spaces}

Let $T=(t_0, t_1, \dots, t_m)$ be a non-decreasing sequence of real numbers, called a \emph{knot vector}. 
The $i$-th univariate $B$-spline of order $p \in \mathbb{N}$ is defined recursively by
\begin{equation*}
N_i^0(t) :=
\begin{cases}
1, & t_i \le t < t_{i+1},\\
0, & \text{otherwise},
\end{cases}
\end{equation*}
and for $p \ge 1$,
\begin{equation*}
N_i^p(t) := \frac{t-t_i}{t_{i+p}-t_i} N_i^{p-1}(t) + \frac{t_{i+p+1}-t}{t_{i+p+1}-t_{i+1}} N_{i+1}^{p-1}(t), \quad t_0 \le t \le t_m,
\end{equation*}
for $i = 0, \dots, n := m-p-1$, with the convention that a fraction with zero denominator is taken to be zero.

The $B$-spline basis functions satisfy the following properties:
\begin{itemize}
    \item \textbf{Positivity:} $N_i^p(t) \ge 0$ for all $t \in [t_0, t_m]$.
    \item \textbf{Compact support:} $N_i^p(t) = 0$ for $t \notin [t_i, t_{i+p+1})$.
    \item \textbf{Partition of unity:} $\sum_{i=0}^n N_i^p(t) = 1$ for all $t \in [t_0, t_m]$.
\end{itemize}

\begin{definition}[$B$-spline curve]
Given control points $(P_0, \dots, P_n) \subset \mathbb{R}^d$ and a knot vector $T$, 
the $B$-spline curve of degree $p$ is
\[
\mathcal{C}(t) = \sum_{i=0}^n N_i^p(t) P_i.
\]
\end{definition}

Important properties of $B$-spline curves include:
\begin{itemize}
    \item $\mathcal{C}$ is a piecewise polynomial curve.
    \item \textbf{Affine invariance:} applying an affine map $\phi$ to $\mathcal{C}$ corresponds to transforming the control points: $\phi(\mathcal{C}(t)) = \sum_i N_i^p(t) \phi(P_i)$.
    \item \textbf{Local support:} if $t \in [t_i, t_{i+1})$, then $\mathcal{C}(t)$ depends only on $(P_{i-p}, \dots, P_i)$ and lies in their convex hull.
    \item \textbf{Local modification:} changing a control point $P_i$ affects $\mathcal{C}(t)$ only in $[t_i, t_{i+p+1})$.
    \item $\mathcal{C}$ is $\mathcal{C}^\infty$ in each open interval $(t_i, t_{i+1})$, and $\mathcal{C}^{p-\mu_i}$ at knot $t_i$, where $\mu_i$ is the knot multiplicity.
    \item For an open knot vector
    \[
    t_0 = \cdots = t_p < t_{p+1} \le \dots \le t_{m-p-1} < t_{m-p} = \cdots = t_m,
    \]
    the curve interpolates the endpoints: $\mathcal{C}(t_0) = P_0$, $\mathcal{C}(t_m) = P_n$.
\end{itemize}

\subsubsection{Schoenberg space and multivariate generalization}

The univariate Schoenberg space of degree $p$ associated with $T$ is
\[
\mathcal{S}^p(T) = \operatorname{span} \{ N_i^p \mid 0 \le i \le n\}.
\]
Classical spline theory shows that $\mathcal{S}^p(T)$ coincides with the space of piecewise polynomials of degree $p$ with smoothness $\mu_i$ at each knot $t_i$.

For the multivariate case, let $d \in \mathbb{N}^*$ and a family of knot vectors $\mathcal{T} = \{T^1, \dots, T^d\}$. Assuming $\Omega = (0,1)^d$ and that each $T^l$ is open,
\[
T^l = \big(\underbrace{0,\dots,0}_{p_l+1}, t_{l,p_l+1}, \dots, t_{m_l-p_l-1}, \underbrace{1,\dots,1}_{p_l+1}\big), \quad 1 \le l \le d,
\]
the tensor-product Schoenberg space is
\[
\mathcal{S}^{\mathbf{p}}(\mathcal{T}) = \mathcal{S}^{p_1}(T^1) \otimes \cdots \otimes \mathcal{S}^{p_d}(T^d),
\]
where $\mathbf{p}=(p_1,\dots,p_d)$, with basis functions
\[
N_{\mathbf{i}}^{\mathbf{p}}(x) = N_{i_1}^{p_1}(x_1) \cdots N_{i_d}^{p_d}(x_d), \quad 0 \le i_l \le m_l - p_l -1, \quad 1 \le l \le d.
\]

The approximation space for the Galerkin method is then
\begin{equation}\label{eq:Vh_def}
V_h = \mathcal{S}^{\mathbf{p}}(\mathcal{T}) \subset H^1(\Omega),
\end{equation}
and for homogeneous Dirichlet boundary conditions on $\Gamma_D \subset \partial \Omega$, we define
\[
V_{h, \Gamma_D} = \{ u_h \in V_h \mid u_h|_{\Gamma_D} = 0 \}.
\] 

\subsection{Quasi-Interpolant}\label{sec:quasi_interpolant}
Quasi-interpolants based on B-splines provide local and computationally efficient approximations without solving global systems. 
The concept originates from Schoenberg \cite{schoenberg_contributions_1946} and was further developed by de Boor \cite{de_boor_practical_2001,de_boor_spline_1973} for uniform and non-uniform splines. 
In particular, Lyche ~\cite{lyche_local_1975,lee_examples_2001,lyche_b-splines_2017}  constructed quasi-interpolant operators that preserve high-order approximation properties in both univariate and tensor-product spline spaces. 
These operators exploit the local support and smoothness of B-splines while maintaining optimal convergence rates.

Given the B-spline basis functions $\{N_i^p\}_{i=0}^n$ associated with the knot vector $T$, 
a spline approximation of a given function $f$ is written as
\[
\mathcal{Q}f(t) := \sum_{i=0}^n \lambda_i(f)\, N_i^p(t),
\]
where $\lambda_i(f)$ are suitable linear functionals. 
The operator $\mathcal{Q}$ is called a quasi-interpolant if it yields an accurate approximation without solving a global linear system; in local constructions, each $\lambda_i(f)$ depends only on the values of $f$ in a neighborhood of the support of $N_i^p$.

In order to ensure that point-evaluation functionals are well defined, we assume throughout this section that $f \in \mathcal{C}^{-1}([a,b])$, where $[a,b]$ is a bounded interval. 
We consider the spline space $\mathcal{S}^p(T)$ associated with a $(p+1)$-basic knot sequence $T$.
The corresponding basic interval $[t_{p+1}, t_{n+1}]$ is assumed to coincide with $[a,b]$.
\begin{definition}[\cite{lyche_b-splines_2017}]
The quasi-interpolant
\[
\mathcal{Q}f(t) = \sum_{i=0}^n \lambda_i(f)\, N_i^p(t)
\]
is called a \emph{local quasi-interpolant} if the following conditions hold:

(i) Each functional $\lambda_i$ is supported on the interval
\[
I_i := \left[t_{i-v_L},\, t_{i+p+1+v_U}\right] \cap [a,b],
\]
for some integers $v_L, v_U \ge -p$, and $I_i$ has nonempty interior.

(ii) The operator $\mathcal{Q}$ reproduces polynomials of degree up to $l$, i.e.,
\[
\mathcal{Q} g(t) = g(t) \quad \text{for all } g \in \mathbb{P}_l \text{ and all } t \in [a,b],
\]
for some integer $l$ satisfying $0 \le l \le p$.
\end{definition}
\begin{definition}[\cite{lyche_b-splines_2017}]
A local quasi-interpolant $\mathcal{Q}$ is said to be \emph{bounded in the $L_q$-norm}, 
for $1 \le q \le \infty$, if there exists a constant $C_{\mathcal{Q}} > 0$ such that, 
for each functional $\lambda_i$, the estimate
\[
|\lambda_i(f)| \le C_{\mathcal{Q}}\, h_{i,p,T}^{-1/q} \, \|f\|_{L_q(I_i)}
\]
holds for all $f \in C^{-1}(I_i)$.

Here,
$h_{i,p,T}$ is the largest knot interval length contained in 
$I_i \cap \operatorname{supp}(N_i^p)$.
\end{definition}
\begin{proposition}
Let 
\[
\mathcal{Q}f(t) = \sum_{i=0}^n \lambda_i(f)\, N_i^p(t)
\]
be a linear quasi-interpolant. If $\mathcal{Q}$ reproduces $\mathbb{P}_p$ and each functional 
$\lambda_i$ is supported on a single knot interval
\[
[t_{m_i}^+,\, t_{m_i+1}^-] \subset [t_i, t_{i+p+1}], 
\quad \text{with } t_{m_i} < t_{m_i+1},
\]
then $\mathcal{Q}$ reproduces the entire spline space $\mathcal{S}^p(T)$, i.e.,
\[
\mathcal{Q}s(t) = s(t) 
\quad \text{for all } s \in \mathcal{S}^p(T) 
\text{ and } t \in [t_{p+1}, t_{n+1}].
\]
In particular, $\mathcal{Q}$ is a projector onto $\mathcal{S}^p(T)$.
\end{proposition}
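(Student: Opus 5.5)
The plan is to show that $\lambda_i(s)$ equals the $i$-th B-spline coefficient of $s$ for every $s \in \mathcal{S}^p(T)$, using only the locality of $\lambda_i$ and the reproduction of $\mathbb{P}_p$.

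\textbf{Step 1 (reduction to one polynomial piece).} Fix $s = \sum_{j=0}^n c_j N_j^p \in \mathcal{S}^p(T)$ and an index $i$. By hypothesis, $\lambda_i$ is supported on the single knot interval $J_i := [t_{m_i}^+, t_{m_i+1}^-]$, which has nonempty interior. The one-sided limits indicate that $\lambda_i$ only sees the values of its argument on the open interval $(t_{m_i}, t_{m_i+1})$, together with the appropriate one-sided limits at its endpoints. On that open interval, $s$ coincides with a single polynomial $q_i \in \mathbb{P}_p$, and the one-sided limits of $s$ and $q_i$ at the endpoints also agree. Hence $\lambda_i(s) = \lambda_i(q_i)$.

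\textbf{Step 2 (polynomial reproduction gives the coefficient).} Extend $q_i$ globally as a polynomial and write it in the B-spline basis as $q_i = \sum_j d_j N_j^p$ on $[t_{p+1}, t_{n+1}]$. On $(t_{m_i}, t_{m_i+1})$ the functions $N_{m_i-p}^p, \dots, N_{m_i}^p$ are linearly independent and are the only ones that are nonzero there. Since $q_i = s$ on this interval, we get $d_j = c_j$ for $m_i - p \le j \le m_i$.

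The condition $J_i \subset [t_i, t_{i+p+1}]$ gives $i \le m_i \le i+p$, so $i$ belongs to this index range and $d_i = c_i$.

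Since $\mathcal{Q}$ reproduces $\mathbb{P}_p$, we have $\mathcal{Q} q_i = q_i$, that is, $\sum_j \lambda_j(q_i) N_j^p = \sum_j d_j N_j^p$. The B-splines are linearly independent on the basic interval, so $\lambda_j(q_i) = d_j$ for every $j$. Taking $j = i$ and using Step 1,
\[
\lambda_i(s) = \lambda_i(q_i) = d_i = c_i .
\]

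\textbf{Step 3 (conclusion).} Summing over $i$ gives $\mathcal{Q}s = \sum_i c_i N_i^p = s$ on $[t_{p+1}, t_{n+1}]$. Because the range of $\mathcal{Q}$ lies in $\mathcal{S}^p(T)$, this yields $\mathcal{Q}^2 = \mathcal{Q}$, so $\mathcal{Q}$ is a projector onto $\mathcal{S}^p(T)$.

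\textbf{Main obstacle.} The delicate point is Step 1, namely making rigorous that a functional "supported on $[t_{m_i}^+, t_{m_i+1}^-]$" cannot distinguish $s$ from $q_i$. I would handle this by stating explicitly the convention for $C^{-1}$ functions: such a functional depends only on values and one-sided limits inside the open interval. Under that convention, $s$ and $q_i$ agree there, so the values of $\lambda_i$ coincide.

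A second point needing care is the linear independence of the B-splines used in Step 2. It holds on the basic interval because $T$ is $(p+1)$-basic. Locally on a nondegenerate knot interval, it is the standard fact that the $p+1$ B-splines that are nonzero there form a basis of $\mathbb{P}_p$ restricted to that interval.
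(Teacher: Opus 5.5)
Your proof is correct. The paper gives no proof of its own; it only cites \cite{lyche_b-splines_2017}. Your argument is essentially the standard one from that source: replace $s$ by its polynomial piece on $(t_{m_i},t_{m_i+1})$, which $\lambda_i$ cannot distinguish from $s$, and then use reproduction of $\mathbb{P}_p$ together with the local linear independence of the $p+1$ B-splines active on that interval to get $\lambda_i(s)=c_i$.
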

\begin{proof}
We refer to \cite{lyche_b-splines_2017} for a detailed proof of this result.
\end{proof}
A general construction of a quasi-interpolant reproducing the polynomial space is given in \cite{lyche_b-splines_2017,lee_examples_2001}. 
For the reader's convenience, we recall it here as follows.

For a fixed index $i$, the coefficient $\lambda_i(f)$ is determined as follows:

(i) Choose an interval 
\[
\hat{I}_i := [t_{m_{L,i}}, t_{m_{U,i}}] \subset [a,b]
\] 
such that 
\[
(t_{m_{L,i}}, t_{m_{U,i}}) \cap (t_i, t_{i+p+1}) \neq \emptyset,
\] 
and such that $m_{U,i} - m_{L,i}$ is uniformly bounded independently of $n$.

(ii) Select a local linear approximation operator $\mathcal{Q}_i$ which can be expressed in terms of B-splines as
\begin{equation}
\label{Poly_quasi}
\mathcal{Q}_i f(t) = \sum_{j=m_{L,i}-p}^{m_{U,i}-1} b_j\, N_j^p(t), 
\quad t \in (t_{m_{L,i}}, t_{m_{U,i}}),
\end{equation}
and which reproduces polynomials of degree up to $l$, i.e.,
\[
\mathcal{Q}_i g(t) = g(t), \quad \forall g \in \mathbb{P}_l, \quad t \in (t_{m_{L,i}}, t_{m_{U,i}}),
\] 
for some fixed $l$ with $0 \le l \le p$.

(iii) Finally, set 
\[
\lambda_i(f) := b_i.
\]

If we replace \eqref{Poly_quasi} by 
\begin{equation} 
\label{Spline_qua}
\mathcal{Q}_i s(t) = s(t), \quad \text{for all } s \in \mathcal{S}^p(T) \text{ and } t \in (t_{m_{L,i}}, t_{m_{U,i}}).
\end{equation}
then the quasi-interpolant $\mathcal{Q}$ reproduces the B-spline space $\mathcal{S}^p(T)$ (see \cite{lyche_b-splines_2017}).

The univariate interpolation and quasi-interpolation operators introduced above can be extended to the multidimensional case via a tensor-product construction.  
Let, for $i=1,\dots,d$, the symbol $\mathcal{Q}_{p_i, T_i}$ denote the univariate operator onto the spline space $\mathcal{S}^{p_i}(T_i)$. We define the multidimensional operator
\[
\mathcal{Q}_{\mathbf{p}, \mathcal{T}}(u) = \left(\mathcal{Q}_{p_1, T_1} \otimes \dots \otimes \mathcal{Q}_{p_d, T_d}\right)(u),
\]
where $\mathbf{p} = (p_1,\dots,p_d)$ and $\mathcal{T} = \{T_1,\dots,T_d\}$.

For a detailed discussion on the tensorization of quasi-interpolants, we refer to \cite{beirao_mathematical_2014}.  
The definition above holds for smooth functions $u$ and is extended by continuity to the appropriate functional spaces (see \cite{schumaker_spline_2007}).

\begin{theorem}[\cite{cottrell_isogeometric_2009}]
\label{interpolation}
Let $r$ and $s$ be integers such that $0 \le r \le s \le \min(p_1, \dots, p_d)+1$. 
Then, there exists a projector(quasi-interpolant) 
\[
\mathcal{Q}_{V_h} : H^r(\Omega) \longrightarrow V_h = \mathcal{S}^{\mathbf{p}}(\mathcal{T})
\] 
satisfying
\begin{equation}
\label{projection}
\| u - \mathcal{Q}_{V_h} u \|_{H^r(\Omega)} \le C h^{s-r} \, \| u \|_{H^s(\Omega)}, 
\quad \forall u \in H^s(\Omega),
\end{equation}
where $C>0$ is a constant independent of $h$.
\end{theorem}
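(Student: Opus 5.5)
The plan is the classical Bramble--Hilbert argument, localized through the support extensions of B-splines and then tensorized. I will assume quasi-uniform (or at least locally quasi-uniform) knot vectors, so that the constant depends only on $p$, $d$ and the mesh regularity. As the projector I take the tensor-product quasi-interpolant $\mathcal{Q}_{V_h} = \mathcal{Q}_{p_1,T_1}\otimes\cdots\otimes\mathcal{Q}_{p_d,T_d}$. Each univariate factor is built by the local recipe (i)--(iii) above, with each functional $\lambda_i$ supported on a single knot interval inside $\operatorname{supp}N_i^p$. By the Proposition above, every factor reproduces the whole space $\mathcal{S}^{p_l}(T_l)$, so their tensor product reproduces $\mathcal{S}^{\mathbf{p}}(\mathcal{T})$ and is a projector onto $V_h$.

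\textbf{Step 1: local $L^2$-stability.} The functionals as defined are point-like, so for $r=0$ I replace them by $L^2$-stable dual functionals on a single element, in the form
\[
\lambda_i(f)=\int_{\tau_i} f\,\psi_i\,dt, \qquad \|\psi_i\|_{L^\infty}\le C h^{-1}.
\]
This fits the bounded-in-$L_q$ definition with $q=2$ and still reproduces the spline space. The tensor-product functionals $\lambda_{\mathbf{i}}$ then satisfy $|\lambda_{\mathbf{i}}(u)|\le C h^{-d/2}\|u\|_{L^2(\tilde K)}$, where $\tilde K$ is a bounded patch of elements around $\operatorname{supp}N_{\mathbf{i}}^{\mathbf{p}}$. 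Using positivity and partition of unity, for every element $K$ with support extension $\tilde K$ this gives
\[
\|\mathcal{Q}_{V_h}u\|_{L^2(K)}\le C\|u\|_{L^2(\tilde K)}.
\]
For $r\ge1$ I combine this with the inverse inequality $|v|_{H^r(K)}\le Ch^{-r}\|v\|_{L^2(K)}$ for polynomials on $K$.

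\textbf{Step 2: local error via polynomial reproduction.} Fix $K$ and take $q\in\mathbb{Q}_{s-1}$ (or $\mathbb{P}_{s-1}$) on $\tilde K$ from the Bramble--Hilbert lemma. Here $s-1\le\min_l p_l$, so $q$ lies locally in $V_h$ and $\mathcal{Q}_{V_h}q=q$ on $K$. Then
\[
\|u-\mathcal{Q}_{V_h}u\|_{H^r(K)}\le \|u-q\|_{H^r(K)}+\|\mathcal{Q}_{V_h}(u-q)\|_{H^r(K)}\le C\sum_{j=0}^{r}h^{j-r}|u-q|_{H^j(\tilde K)}.
\]
By Bramble--Hilbert, each term is bounded by $Ch^{s-r}|u|_{H^s(\tilde K)}$.

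\textbf{Step 3: summation.} I square and sum over $K$. The overlap of the patches $\tilde K$ is bounded by a constant depending only on $p$ and $d$, so the result is the global bound $Ch^{s-r}\|u\|_{H^s(\Omega)}$.

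\textbf{Main obstacle: global regularity when $r\ge1$.} Summing elementwise $H^r$ norms gives a global $H^r$ norm only if $\mathcal{Q}_{V_h}u\in H^r(\Omega)$. This requires $r\le$ the minimal interelement smoothness plus one, which holds for maximal-regularity splines with $r\le p$. For $r=p+1$ the statement must be read as a broken (elementwise) norm. A second subtlety is the tensorization of the $L^2$-stable univariate functionals, including near the boundary with open knot vectors. There I will rely on the construction in \cite{beirao_mathematical_2014}, and on the well-known Bazilevs--Beir\~ao da Veiga--Cottrell--Hughes--Sangalli estimates, as the detailed reference.
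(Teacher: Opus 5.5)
Your proposal is correct and follows essentially the standard argument behind this result. The paper does not prove it but cites it from \cite{cottrell_isogeometric_2009}, which goes back to Bazilevs, Beir\~ao da Veiga, Cottrell, Hughes and Sangalli: an $L^2$-stable dual-functional quasi-interpolant, local stability on support extensions, Bramble--Hilbert on those patches, and bounded-overlap summation. The caveats you raise are genuine requirements of the cited result and are left implicit in the paper's statement: the constant is $h$-independent only under (local) quasi-uniformity, and for $r$ beyond the interelement smoothness plus one (e.g.\ $r=s=p+1$ with $C^{p-1}$ splines) the left-hand side must be read as a broken norm, which the original source handles via bent Sobolev spaces.
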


\begin{remark}
The same result holds in the presence of Dirichlet boundary conditions. 
In this case, one can construct a projector 
\[
\mathcal{Q}_{V_{h,\Gamma_D}} : H^r(\Omega) \longrightarrow V_{h, \Gamma_D} 
\] 

which satisfies the estimate \eqref{projection} (see \cite{cottrell_isogeometric_2009} for details).
\end{remark}
We conclude this subsection with the following inverse inequality.

\begin{theorem}[\cite{bazilevs_isogeometric_2006}]
\label{Inv_splines}
Let $k$ and $l$ be integers such that $0 \le k \le l$. Then, for all $v_h \in V_h$, the following estimate holds:
\begin{equation}
\label{inv_ineg}  
| v_h |_{H^l(\Omega)} \le C h^{k-l} \, | v_h |_{H^k(\Omega)},
\end{equation}

where $C>0$ is a constant independent of $h$.
\end{theorem}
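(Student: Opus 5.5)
The plan is to establish the inverse inequality elementwise and then sum over the mesh. The argument combines three ingredients: finite-dimensionality on a reference element, a scaling argument, and the regularity of the knot vectors. Throughout I assume the mesh $\mathcal{M}_h$ induced by the knot vectors $\mathcal{T}$ is shape-regular and quasi-uniform, meaning every nonempty element $Q=\prod_l (t_{l,j_l},t_{l,j_l+1})$ satisfies $\operatorname{diam}(Q)\simeq h$. This is the implicit standing assumption behind a global, $h$-uniform estimate like \eqref{inv_ineg}.

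\textbf{Step 1 (local structure).} On each open element $Q$, every $v_h\in V_h$ restricts to a tensor-product polynomial in $\mathbb{Q}_{\mathbf{p}}:=\mathbb{P}_{p_1}\otimes\cdots\otimes\mathbb{P}_{p_d}$. The spline smoothness across element faces is irrelevant for this step, because $|v_h|_{H^l(\Omega)}^2=\sum_Q |v_h|_{H^l(Q)}^2$. This elementwise sum is valid whenever $v_h\in H^l(\Omega)$, and in particular for $l\le \min_i p_i$ in the maximally smooth case. For larger $l$ I would interpret the left-hand side as the broken seminorm, as is standard in IGA.

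\textbf{Step 2 (reference estimate).} On the unit cube $\hat Q=(0,1)^d$ I prove $|\hat q|_{H^l(\hat Q)}\le \hat C\,|\hat q|_{H^k(\hat Q)}$ for all $\hat q\in\mathbb{Q}_{\mathbf{p}}$. Both sides are seminorms on a finite-dimensional space. If $|\hat q|_{H^k}=0$, then all $k$-th derivatives of $\hat q$ vanish, so $\hat q\in\mathbb{P}_{k-1}$, and hence all $l$-th derivatives vanish as well, since $l\ge k$. The kernel of the right-hand side is therefore contained in the kernel of the left-hand side. Passing to the quotient by $\ker|\cdot|_{H^k}$, on which $|\cdot|_{H^k}$ is a norm, the equivalence of norms in finite dimensions gives $\hat C$. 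This constant depends only on $\mathbf{p},k,l,d$.

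\textbf{Step 3 (scaling and summation).} Map $\hat Q$ to $Q$ by the affine diagonal map $F_Q(\hat x)=a+\operatorname{diag}(h_1,\dots,h_d)\hat x$. The chain rule gives $|q|_{H^m(Q)}\simeq h^{d/2-m}|\hat q|_{H^m(\hat Q)}$, with constants that depend only on the anisotropy ratios $\max_i h_i/\min_i h_i$, which are bounded by shape regularity. Combining this with Step 2 yields $|v_h|_{H^l(Q)}\le C h_Q^{k-l}|v_h|_{H^k(Q)}$. Quasi-uniformity then lets me replace $h_Q$ by $h$, since $h_Q^{k-l}\le C h^{k-l}$ because $k-l\le 0$ and $h_Q\ge ch$. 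Squaring and summing over $Q$ gives \eqref{inv_ineg}.

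The main obstacle is the scaling step for anisotropic elements. The mixed derivatives pick up different powers of $h_i$, so the resulting constant must be controlled through the bounded aspect ratio. The quasi-uniformity hypothesis is also essential: without it, only the local form with $h_Q$ holds.
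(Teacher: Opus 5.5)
Your proof is correct and follows the standard route behind the cited result. The paper gives no argument of its own and simply defers to Bazilevs et al., where the inverse estimate is likewise obtained elementwise by pulling back to a reference element, using equivalence of (semi)norms on a finite-dimensional local space, and scaling back. In the paper's identity-parametrized B-spline setting, every element pulls back affinely to the same space $\mathbb{Q}_{\mathbf{p}}$, which is exactly what makes your seminorm form with a uniform constant legitimate (a non-affine NURBS geometry map would generate lower-order terms on the right). You also rightly made explicit the quasi-uniformity and broken-seminorm conventions that the statement leaves implicit.
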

We are now in a position to introduce the stabilized discrete formulation of the problem.
\section{Stabilized Discrete Formulation}
The proposed approach relies on a multilevel decomposition of the discrete space into large- and small-scale components. Diffusion-type stabilization terms are added and applied only to the small-scale contributions at each level. 

From a multiscale perspective, these terms account for the influence of unresolved fine-scale structures on the resolved small scales within each hierarchical level.
Consider a domain $\Omega=[a,b]\times[c,d]$ equipped with open knot vectors defining the tensor-product mesh $\mathcal{T}_h$. 
The B-spline space of degree $p$ associated with $\mathcal{T}_h$, introduced above, is denoted by
\[
\mathcal{S}^p(\mathcal{T}_h).
\]

We define the discrete space

\[
V_h := \mathcal{S}^p(\mathcal{T}_h) \cap H_0^1(\Omega).
\]

The standard discrete variational formulation of \eqref{VF_equa} then reads as follows:

\medskip
\noindent
\textbf{Find $u_h \in V_h$ such that}
\begin{equation}
\label{DVF_equa_IGA}
a(u_h,v_h) = (f,v_h),
\qquad \forall v_h \in V_h.
\end{equation}

\medskip
\noindent
\textbf{Multilevel hierarchy.}

Let $\mathcal{M}_h^{(k)}$, $k=1,\dots,L$, be a family of hierarchically coarsened meshes associated with the basic mesh $\mathcal{T}_h$, and set
\[
\mathcal{M}_h^{(0)} = \mathcal{T}_h.
\]
Each level $\mathcal{M}_h^{(k)}$ is obtained from $\mathcal{M}_h^{(k-1)}$ by removing selected interior knots. 
We denote by $H^{(k)}$ the mesh size associated with $\mathcal{M}_h^{(k)}$.

We assume that there exists a constant $C>0$, independent of $h$, such that
\[
H^{(k)} \le C h,
\qquad \forall k=1,\dots,L.
\]
We define the ratio
\[
c_k := \frac{h}{H^{(k)}}, \qquad k = 1,\dots,L.
\]

For each level, we introduce the B-spline space of degree $p$
\[
\mathcal{S}^{p}(\mathcal{M}_h^{(k)}),
\]
and denote by $\Pi_k^{p}$ the associated quasi-interpolation operator onto 
$\mathcal{S}^{p}(\mathcal{M}_h^{(k)})$, satisfying the approximation estimate stated in \eqref{projection}.

\medskip
\noindent
\textbf{Multilevel stabilization.}

For $u_h, v_h \in V_h$, we define the multilevel projection stabilization term as
\[
s_{\mathrm{MQ}}(u_h, v_h)
= \tau_h \sum_{k=1}^{L} c_k \, 
\int_{\Omega}
\left( \boldsymbol{b} \cdot \nabla ( u_h - \Pi_{k}^{p} u_h ) \right)
\left( \boldsymbol{b} \cdot \nabla ( v_h - \Pi_{k}^{p} v_h ) \right) \, dx,
\]
where $\tau_h$ is the stabilization parameter to be defined in the subsequent sections.
Note that, unlike classical LPS methods, the projection $\Pi_k^p$ acts on $u_h$ itself rather than on its streamline derivative. This design leads to a natural multiscale decomposition: the fluctuation $u_h - \Pi_k^p u_h$ captures exactly the fine-scale component that the coarse mesh $\mathcal{M}_h^{(k)}$ cannot resolve. Moreover, since the projection uses degree $p$ (the same as $u_h$), the resulting consistency error benefits from the full approximation order of the B-spline space.

We then define the stabilized bilinear form
\begin{equation}  
\label{weak_MQ}
a_{\mathrm{MQ}}(u,v)
=
a(u,v) + s_{\mathrm{MQ}}(u,v).
\end{equation}

The multilevel projection discretization of \eqref{DVF_equa_IGA} reads:

\medskip
\noindent
\textbf{Find $u_h \in V_h$ such that}
\begin{equation}
\label{VF_MQ}
a_{\mathrm{MQ}}(u_h,v_h)
=
(f,v_h),
\qquad \forall v_h \in V_h.
\end{equation}

\begin{remark}
\label{rem:time_dependent}
Another suitable choice of stabilization operator, which is particularly well-suited for extending the method to time-dependent problems, flows with highly variable advection directions, or problems featuring complex internal layers (e.g., Test 4), is the isotropic penalization of the full gradient of the fine-scale fluctuations:
\[
s_{\mathrm{MQ}}(u_h, v_h) = \tau_h \sum_{k=1}^{L} c_k \int_{\Omega} \nabla(u_h - \Pi_k^p u_h) \cdot \nabla(v_h - \Pi_k^p v_h) \, dx.
\]
This isotropic form provides a balanced stabilization over the entire domain. Furthermore, thanks to the high approximation power of the quasi-interpolants, this full-gradient penalization does not introduce excessive numerical crosswind diffusion.
\end{remark}

\medskip
\noindent
\textbf{Energy norms.} 
We introduce the norm
\[
\| v \|_G
=
\left(
\varepsilon |v|_{1,\Omega}^2
+
\| \sigma^{1/2} v \|_{0,\Omega}^2
\right)^{1/2},
\]
and the stabilized norm
\[
\| v \|_{\mathrm{MQ}}
=
\left(
\| v \|_G^2
+
s_{\mathrm{MQ}}(v,v)
\right)^{1/2}.
\]

For all $v \in H_0^1(\Omega)$, we have
\[
a(v,v) = \| v \|_G^2,
\qquad
a_{\mathrm{MQ}}(v,v) = \| v \|_{\mathrm{MQ}}^2.
\]
Therefore, the multilevel projection discretization \eqref{VF_MQ} admits a unique solution. 

\subsection{Stability and Error Analysis in the MQ Norm}

In this section, we present the general stability and a priori error analysis for the proposed multilevel stabilization. The results in this subsection apply to the general multidimensional setting with variable coefficients.

We define the multilevel approximation constant:
\begin{equation}\label{eq:Sigma}
\Sigma(p,L) := \sum_{k=1}^{L} c_k\, (H^{(k)})^{2p}.
\end{equation}

\begin{theorem}[Coercivity in the MQ norm]\label{thm:coercivity}
For all $v_h \in V_h$:
\[
a_{\mathrm{MQ}}(v_h, v_h) = \|v_h\|_{\mathrm{MQ}}^2.
\]
In particular, the discrete problem \eqref{VF_MQ} admits a unique solution.
\end{theorem}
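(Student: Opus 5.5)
The identity is proved by expanding the bilinear form and handling the convective term with integration by parts; uniqueness then follows because the square-matrix system has trivial kernel. Fix $v_h \in V_h \subset H^1_0(\Omega)$. By \eqref{weak_MQ},
\[
a_{\mathrm{MQ}}(v_h,v_h) = \varepsilon|v_h|_{1,\Omega}^2 + (\mathbf{b}\cdot\nabla v_h, v_h) + (c\,v_h,v_h) + s_{\mathrm{MQ}}(v_h,v_h).
\]
The stabilization term needs no manipulation. Each summand of $s_{\mathrm{MQ}}(v_h,v_h)$ is $\tau_h c_k\int_\Omega (\mathbf{b}\cdot\nabla(v_h-\Pi_k^p v_h))^2\,dx$, so it is real and nonnegative provided $\tau_h\ge 0$; recall $c_k>0$. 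It is also finite, because $\Pi_k^p v_h$ is a spline in $H^1$ by Theorem~\ref{interpolation}. This term therefore appears verbatim in $\|v_h\|_{\mathrm{MQ}}^2$.

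The only real step is the convective term. I would write $(\mathbf{b}\cdot\nabla v_h)v_h = \tfrac12\,\mathbf{b}\cdot\nabla(v_h^2)$ and integrate by parts using the divergence theorem. This is justified for $\mathbf{b}\in W^{1,\infty}(\Omega)^d$ and $v_h^2\in W^{1,1}_0(\Omega)$, which holds since $v_h\in H^1_0$ and $\Omega$ is Lipschitz. The boundary term vanishes because $v_h=0$ on $\partial\Omega$, leaving
\[
(\mathbf{b}\cdot\nabla v_h, v_h) = -\tfrac12(\operatorname{div}\mathbf{b}\,v_h, v_h).
\]
Adding the reaction term then gives $((c-\tfrac12\operatorname{div}\mathbf{b})v_h,v_h)=\|\sigma^{1/2}v_h\|_{0,\Omega}^2$, where $\sigma^{1/2}$ is well defined since $\sigma\ge\sigma_0>0$. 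Combining everything yields
\[
a_{\mathrm{MQ}}(v_h,v_h)=\|v_h\|_G^2+s_{\mathrm{MQ}}(v_h,v_h)=\|v_h\|_{\mathrm{MQ}}^2,
\]
which is the identity.

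For existence and uniqueness, \eqref{VF_MQ} is a square linear system on the finite-dimensional space $V_h$, so it suffices to show the kernel is trivial. If $a_{\mathrm{MQ}}(u_h,v_h)=0$ for all $v_h$, take $v_h=u_h$ to get $\|u_h\|_{\mathrm{MQ}}^2=0$. Then $\sigma_0\|u_h\|_{0,\Omega}^2\le\|u_h\|_G^2=0$, so $u_h=0$. Hence the system matrix is injective, therefore invertible, and \eqref{VF_MQ} has a unique solution.

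The only point needing care is justifying the integration by parts under the stated regularity of $\mathbf{b}$; the rest is routine. One should also note explicitly the assumption $\tau_h\ge0$, which the paper leaves implicit, since it is what makes $\|\cdot\|_{\mathrm{MQ}}$ a genuine norm.
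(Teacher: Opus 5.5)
Your proposal is correct and follows the same route as the paper: integrate the convective term by parts using $v_h|_{\partial\Omega}=0$ to get $-\tfrac12(\operatorname{div}\mathbf{b}\,v_h,v_h)$, combine it with the reaction term into $\|\sigma^{1/2}v_h\|_{0,\Omega}^2$, and carry the stabilization term over unchanged. Your explicit trivial-kernel argument for uniqueness, and your remark that it requires $\tau_h\ge 0$ (so that $s_{\mathrm{MQ}}(v_h,v_h)\ge 0$), spell out what the paper leaves implicit in its ``in particular'' clause.
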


\begin{proof}
By integration by parts and $v_h|_{\partial\Omega} = 0$:
\[
(\mathbf{b}\cdot\nabla v_h, v_h) = -\tfrac{1}{2}(\operatorname{div}\mathbf{b}\, v_h, v_h).
\]
Therefore:
\begin{align*}
a_{\mathrm{MQ}}(v_h, v_h) 
&= \varepsilon |\nabla v_h|_{L^2(\Omega)}^2 + ((c - \tfrac{1}{2}\operatorname{div}\mathbf{b}) v_h, v_h) + s_{\mathrm{MQ}}(v_h, v_h) \\
&= \varepsilon |v_h|_{H^1(\Omega)}^2 + \|\sigma^{1/2} v_h\|_{L^2(\Omega)}^2 + s_{\mathrm{MQ}}(v_h, v_h) = \|v_h\|_{\mathrm{MQ}}^2. \qedhere
\end{align*}
\end{proof}

\begin{lemma}[Modified Galerkin orthogonality]\label{lem:galerkin_orth}
Let $u \in H^{p+1}(\Omega) \cap H_0^1(\Omega)$ solve the continuous problem \eqref{VF_equa} and $u_h \in V_h$ solve the discrete problem \eqref{VF_MQ}. Then:
\begin{equation}\label{eq:mod_galerkin}
a_{\mathrm{MQ}}(u - u_h,\, v_h) = s_{\mathrm{MQ}}(u,\, v_h) \qquad \forall v_h \in V_h.
\end{equation}
\end{lemma}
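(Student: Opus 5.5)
The identity follows from subtracting the discrete equation from the continuous one, using bilinearity of $a$ and $s_{\mathrm{MQ}}$.

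Fix $v_h \in V_h$. Since $V_h = \mathcal{S}^p(\mathcal{T}_h)\cap H_0^1(\Omega) \subset H_0^1(\Omega)$, the choice $v = v_h$ is admissible in the continuous problem \eqref{VF_equa}, which gives $a(u,v_h) = (f,v_h)$. The discrete problem \eqref{VF_MQ} gives
\[
a(u_h,v_h) + s_{\mathrm{MQ}}(u_h,v_h) = (f,v_h).
\]
Subtracting the two relations yields $a(u-u_h,v_h) - s_{\mathrm{MQ}}(u_h,v_h) = 0$. Adding $s_{\mathrm{MQ}}(u,v_h)$ to both sides and using linearity of $s_{\mathrm{MQ}}$ in its first argument then gives
\[
a(u-u_h,v_h) + s_{\mathrm{MQ}}(u-u_h,v_h) = s_{\mathrm{MQ}}(u,v_h),
\]
which is exactly $a_{\mathrm{MQ}}(u-u_h,v_h) = s_{\mathrm{MQ}}(u,v_h)$ by the definition \eqref{weak_MQ}.

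The only genuine obstacle is checking that $s_{\mathrm{MQ}}(u,\cdot)$ is well defined and bilinear when its first argument is the non-discrete function $u$. The form was introduced on $V_h\times V_h$, so I would first note that it extends to $H^1(\Omega)\times V_h$. Each $\Pi_k^p$ is a linear operator defined on $H^r(\Omega)$ by Theorem~\ref{interpolation}, extended by continuity as discussed in Section~\ref{sec:quasi_interpolant}. For $u\in H^{p+1}(\Omega)\subset H^1(\Omega)$ this gives $\nabla(u-\Pi_k^p u)\in L^2(\Omega)^d$. Since $\mathbf{b}\in W^{1,\infty}(\Omega)^d \subset L^\infty(\Omega)^d$, every integrand in $s_{\mathrm{MQ}}$ is integrable, and linearity of $\Pi_k^p$ makes $s_{\mathrm{MQ}}$ bilinear on this larger space. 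Everything else is pure algebra.

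The identity also shows that the method is not strongly consistent: the defect is $s_{\mathrm{MQ}}(u,v_h)$, which the later error analysis will control using \eqref{projection} and $\Sigma(p,L)$.
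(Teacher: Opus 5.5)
Your proof is correct and matches the paper's argument: test the continuous problem with $v_h \in V_h \subset H_0^1(\Omega)$, subtract the discrete stabilized equation, then add and subtract $s_{\mathrm{MQ}}(u,v_h)$ using bilinearity. You also check that $s_{\mathrm{MQ}}(u,\cdot)$ is well defined and linear in its first argument for $u \in H^{p+1}(\Omega)$; the paper uses this implicitly, so your check makes the argument more complete without changing it.
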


\begin{proof}
From the continuous problem: $a(u, v_h) = (f, v_h)$.
From the discrete problem: $a(u_h, v_h) + s_{\mathrm{MQ}}(u_h, v_h) = (f, v_h)$.
Subtracting: $a(u, v_h) - a(u_h, v_h) = s_{\mathrm{MQ}}(u_h, v_h)$.
By bilinearity of $s_{\mathrm{MQ}}$:
\begin{align*}
a_{\mathrm{MQ}}(u - u_h, v_h) &= a(u, v_h) - a(u_h, v_h) + s_{\mathrm{MQ}}(u, v_h) - s_{\mathrm{MQ}}(u_h, v_h) \\
&= (f, v_h) + s_{\mathrm{MQ}}(u, v_h) - [a(u_h, v_h) + s_{\mathrm{MQ}}(u_h, v_h)] \\
&= (f, v_h) + s_{\mathrm{MQ}}(u, v_h) - (f, v_h) \\
&= s_{\mathrm{MQ}}(u, v_h). \qedhere
\end{align*}
\end{proof}

\begin{lemma}[Multilevel consistency error]\label{lem:consistency}
Let $u \in H^{p+1}(\Omega)$. Then, with the constant $C_{\Pi}= \max_k C_{\Pi_k^p}$, we have:
\begin{equation}\label{eq:consistency}
\sup_{v_h \in V_h \setminus \{0\}} \frac{|s_{\mathrm{MQ}}(u,\, v_h)|}{\|v_h\|_{\mathrm{MQ}}} 
\le C_\Pi\, \|\mathbf{b}\|_{L^\infty(\Omega)}\, \tau_h^{1/2}\, \Sigma(p,L)^{1/2}\, \|u\|_{H^{p+1}(\Omega)}.
\end{equation}
\end{lemma}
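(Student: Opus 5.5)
The plan is to bound $s_{\mathrm{MQ}}(u,v_h)$ level by level with the Cauchy--Schwarz inequality, then use the quasi-interpolant estimate \eqref{projection} on the continuous solution $u$. The factor involving $v_h$ will be absorbed into $\|v_h\|_{\mathrm{MQ}}$.

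First, since $s_{\mathrm{MQ}}$ is a weighted sum of $L^2$ inner products with nonnegative weights $\tau_h c_k$, it is a symmetric positive semidefinite bilinear form on $H^1(\Omega)$. Writing $w_k := u - \Pi_k^p u$ and $z_k := v_h - \Pi_k^p v_h$, we have
\[
|s_{\mathrm{MQ}}(u,v_h)| \le \sum_{k=1}^L \tau_h c_k \,\|\mathbf{b}\cdot\nabla w_k\|_{L^2(\Omega)}\,\|\mathbf{b}\cdot\nabla z_k\|_{L^2(\Omega)}.
\]
Applying the discrete Cauchy--Schwarz inequality over $k$ gives
\[
|s_{\mathrm{MQ}}(u,v_h)| \le \Big(\sum_{k=1}^L \tau_h c_k \|\mathbf{b}\cdot\nabla w_k\|^2_{L^2(\Omega)}\Big)^{1/2} s_{\mathrm{MQ}}(v_h,v_h)^{1/2}.
\]
The last factor is bounded by $\|v_h\|_{\mathrm{MQ}}$ by the definition of the MQ norm. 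After dividing by $\|v_h\|_{\mathrm{MQ}}$, the $v_h$-dependence disappears and the supremum is controlled by the first factor alone.

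Second, for each level I will bound $\|\mathbf{b}\cdot\nabla w_k\|_{L^2} \le \|\mathbf{b}\|_{L^\infty}|u-\Pi_k^p u|_{H^1}$. Then I apply the approximation estimate \eqref{projection} of Theorem~\ref{interpolation} for the space $\mathcal{S}^p(\mathcal{M}_h^{(k)})$, with $r=1$, $s=p+1$ and mesh size $H^{(k)}$. This is admissible because $s=p+1\le p+1$, and it yields
\[
|u-\Pi_k^p u|_{H^1} \le C_{\Pi_k^p}(H^{(k)})^{p}\|u\|_{H^{p+1}(\Omega)}.
\]
Squaring, summing with the weights $\tau_h c_k$, and bounding each $C_{\Pi_k^p}$ by $C_\Pi$ gives
\[
\sum_{k=1}^L \tau_h c_k \|\mathbf{b}\cdot\nabla w_k\|^2_{L^2(\Omega)} \le C_\Pi^2\|\mathbf{b}\|^2_{L^\infty}\tau_h\Sigma(p,L)\|u\|^2_{H^{p+1}(\Omega)}.
\]
Taking square roots gives exactly \eqref{eq:consistency}.

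The only delicate point is to justify applying $\Pi_k^p$ to $u\in H^{p+1}$ together with the estimate at scale $H^{(k)}$ rather than $h$. Boundary conditions play no role, since $\Pi_k^p$ maps into the unconstrained space $\mathcal{S}^p(\mathcal{M}_h^{(k)})$. I would handle this by invoking Theorem~\ref{interpolation} on each coarse mesh $\mathcal{M}_h^{(k)}$, treating it as a quasi-uniform tensor mesh in its own right. This is consistent with the earlier requirement that $\Pi_k^p$ satisfy \eqref{projection}. The constant $C_{\Pi_k^p}$ is then the level-dependent constant of that estimate, and taking $C_\Pi=\max_k C_{\Pi_k^p}$ makes it uniform, which is finite because $L$ is finite.
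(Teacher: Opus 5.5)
Your proposal is correct and follows the paper's proof essentially verbatim: your level-by-level Cauchy--Schwarz followed by the discrete Cauchy--Schwarz over $k$ is just the Cauchy--Schwarz inequality for the semidefinite form $s_{\mathrm{MQ}}$, which the paper applies directly before bounding $s_{\mathrm{MQ}}(v_h,v_h)\le\|v_h\|_{\mathrm{MQ}}^2$. The rest of your argument is the same as the paper's: apply the quasi-interpolant estimate with $r=1$, $s=p+1$ at scale $H^{(k)}$ on each level, and sum with weights $c_k$ to produce $\Sigma(p,L)$.
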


\begin{proof}
By the Cauchy--Schwarz inequality applied to $s_{\mathrm{MQ}}$:
\[
|s_{\mathrm{MQ}}(u, v_h)| \le s_{\mathrm{MQ}}(u, u)^{1/2}\, s_{\mathrm{MQ}}(v_h, v_h)^{1/2} \le s_{\mathrm{MQ}}(u,u)^{1/2}\, \|v_h\|_{\mathrm{MQ}}.
\]

To bound $s_{\mathrm{MQ}}(u,u)$, we use the approximation property \eqref{projection} with $r=1$, $s=p+1$:
\[
\|\mathbf{b}\cdot\nabla(u - \Pi_k^p u)\|_{L^2(\Omega)} \le \|\mathbf{b}\|_{L^\infty(\Omega)}\, |u - \Pi_k^p u|_{H^1(\Omega)} \le C_\Pi\, \|\mathbf{b}\|_{L^\infty(\Omega)}\, (H^{(k)})^p\, \|u\|_{H^{p+1}(\Omega)}.
\]

Therefore:
\begin{align*}
s_{\mathrm{MQ}}(u,u) 
&= \tau_h \sum_{k=1}^L c_k\, \|\mathbf{b}\cdot\nabla(u - \Pi_k^p u)\|_{L^2(\Omega)}^2 \\
&\le C_\Pi^2\, \|\mathbf{b}\|_{L^\infty(\Omega)}^2\, \tau_h\, \|u\|_{H^{p+1}(\Omega)}^2 \sum_{k=1}^L c_k\, (H^{(k)})^{2p} \\
&= C_\Pi^2\, \|\mathbf{b}\|_{L^\infty(\Omega)}^2\, \tau_h\, \Sigma(p,L)\, \|u\|_{H^{p+1}(\Omega)}^2. \qedhere
\end{align*}
\end{proof}

\begin{theorem}[A priori error estimate in the MQ norm]\label{thm:apriori}
Let $u \in H^{p+1}(\Omega) \cap H_0^1(\Omega)$ be the solution of the continuous problem and $u_h \in V_h$ be the solution of the multilevel projection discretization. Assume the stabilization parameter $\tau_h$ satisfies:
\begin{equation}
0 \le \tau_h \le C_\tau \, c_L^{-1} \, \frac{h^2}{\max\{\varepsilon, \, h \|\mathbf{b}\|_{0,\infty}\}},
\end{equation}
where $C_\tau > 0$ is a positive constant.
Then:
\begin{equation}\label{eq:apriori}
\|u - u_h\|_{\mathrm{MQ}} \le C\, \Phi(\varepsilon, h, \mathbf{b}, \sigma)\, h^p\, \|u\|_{H^{p+1}(\Omega)},
\end{equation}
where
\[
\Phi(\varepsilon, h, \mathbf{b}, \sigma) = \left(\varepsilon + \frac{\|\mathbf{b}\|_{L^\infty(\Omega)}^2}{\sigma_0} + h^2\|\sigma\|_{L^\infty(\Omega)} + \tau_h\, \|\mathbf{b}\|_{L^\infty(\Omega)}^2\, \frac{\Sigma(p,L)}{h^{2p}}\right)^{1/2},
\]
and $C > 0$ depends only on $p$, $L$, and $C_\Pi$.
\end{theorem}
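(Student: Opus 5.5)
We follow the classical Strang-type route. We split the error through the quasi-interpolant onto $V_h$ given by Theorem~\ref{interpolation} (in its Dirichlet version): set $\eta := u - \mathcal{Q}_{V_h} u$ and $\xi_h := \mathcal{Q}_{V_h} u - u_h \in V_h$, so that $u-u_h = \eta + \xi_h$. By the triangle inequality $\|u-u_h\|_{\mathrm{MQ}} \le \|\eta\|_{\mathrm{MQ}} + \|\xi_h\|_{\mathrm{MQ}}$. The interpolation part $\|\eta\|_{\mathrm{MQ}}$ is bounded directly. From \eqref{projection} we have $\varepsilon|\eta|_1^2 \le C\varepsilon h^{2p}\|u\|_{p+1}^2$ and $\|\sigma^{1/2}\eta\|_0^2 \le C\|\sigma\|_\infty h^{2p+2}\|u\|_{p+1}^2$. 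For the stabilization part we bound $s_{\mathrm{MQ}}(\eta,\eta)$ using $\Pi_k^p$'s $H^1$-stability together with $|\eta|_1 \le Ch^p\|u\|_{p+1}$, or more simply $s_{\mathrm{MQ}}(\eta,\eta) \le 2 s_{\mathrm{MQ}}(u,u) + 2 s_{\mathrm{MQ}}(\mathcal{Q}_{V_h}u,\mathcal{Q}_{V_h}u)$. This yields a contribution $\tau_h\|\mathbf{b}\|_\infty^2\Sigma(p,L)\|u\|_{p+1}^2$, up to the term handled by the $\tau_h$ bound, matching the last term of $\Phi$.

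For the discrete part we use coercivity (Theorem~\ref{thm:coercivity}) and the modified Galerkin orthogonality (Lemma~\ref{lem:galerkin_orth}):
\[
\|\xi_h\|_{\mathrm{MQ}}^2 = a_{\mathrm{MQ}}(\xi_h,\xi_h) = a_{\mathrm{MQ}}(u-u_h,\xi_h) - a_{\mathrm{MQ}}(\eta,\xi_h) = s_{\mathrm{MQ}}(u,\xi_h) - a(\eta,\xi_h) - s_{\mathrm{MQ}}(\eta,\xi_h).
\]
The first term is controlled by Lemma~\ref{lem:consistency}, and the third by Cauchy--Schwarz on $s_{\mathrm{MQ}}$ together with the bound on $s_{\mathrm{MQ}}(\eta,\eta)^{1/2}$ above. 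Within $a(\eta,\xi_h)$, the diffusion term gives $\varepsilon^{1/2}|\eta|_1\,\|\xi_h\|_{\mathrm{MQ}}$, and the reaction term gives $\|\sigma\|_\infty^{1/2}\|\eta\|_0\,\|\xi_h\|_{\mathrm{MQ}}$ after writing $c = \sigma + \tfrac12\operatorname{div}\mathbf{b}$; the $\operatorname{div}\mathbf{b}$ piece is grouped with the convective term.

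The main obstacle is the convective term $(\mathbf{b}\cdot\nabla\eta,\xi_h)$, because $\|\cdot\|_{\mathrm{MQ}}$ gives no control of the full streamline derivative of $\xi_h$. We will not integrate by parts. Instead we estimate it crudely via the $L^2$ part of the norm:
\[
|(\mathbf{b}\cdot\nabla\eta,\xi_h)| \le \|\mathbf{b}\|_\infty|\eta|_1\,\sigma_0^{-1/2}\|\sigma^{1/2}\xi_h\|_0 \le C\frac{\|\mathbf{b}\|_\infty}{\sigma_0^{1/2}}h^p\|u\|_{p+1}\|\xi_h\|_{\mathrm{MQ}}.
\]
This produces exactly the $\|\mathbf{b}\|_\infty^2/\sigma_0$ term in $\Phi$, which explains its form; it is suboptimal but is what the norm allows. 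The leftover $\operatorname{div}\mathbf{b}$ reaction contribution is absorbed similarly, with constants depending on $\mathbf{b}$ through $\|\mathbf{b}\|_{W^{1,\infty}}$ bounded by $\|\sigma\|_\infty$-type quantities or hidden in $C$. Dividing by $\|\xi_h\|_{\mathrm{MQ}}$ gives $\|\xi_h\|_{\mathrm{MQ}} \le C\Phi h^p\|u\|_{p+1}$.

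Finally, we check that the stabilization contribution of $\mathcal{Q}_{V_h}u$ in $s_{\mathrm{MQ}}(\eta,\eta)$ is dominated. We would apply $\|\nabla(v-\Pi_k^p v)\|_0 \le C(H^{(k)})^p\|v\|_{p+1}$ after writing $\eta-\Pi_k^p\eta = (I-\Pi_k^p)(u-\mathcal{Q}_{V_h}u)$ and using $H^1$-stability of $I-\Pi_k^p$. This gives $s_{\mathrm{MQ}}(\eta,\eta) \le C\tau_h\|\mathbf{b}\|_\infty^2 \sum_k c_k h^{2p}\|u\|_{p+1}^2$. The hypothesis on $\tau_h$ (with $c_k\le c_L$-type bounds and $H^{(k)}\le Ch$) ensures that this is at most $C(\|\mathbf{b}\|_\infty h + \ldots)h^{2p}$ and is absorbed into $\Phi^2 h^{2p}$, since $\Sigma(p,L)/h^{2p}$ is comparable to $\sum_k c_k$. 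Collecting the squared contributions yields \eqref{eq:apriori}.
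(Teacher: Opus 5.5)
Your proposal is correct and follows essentially the same route as the paper. Both use the splitting through $\mathcal{Q}_{V_h}u$, coercivity plus the modified Galerkin orthogonality, the crude $\sigma_0^{-1/2}$ bound on $(\mathbf b\cdot\nabla\eta,\xi_h)$ (which is what produces $\|\mathbf b\|_{L^\infty(\Omega)}^2/\sigma_0$), $H^1$-stability of $\Pi_k^p$ for $s_{\mathrm{MQ}}(\eta,\eta)$, Lemma~\ref{lem:consistency} for the consistency term, and the triangle inequality.

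Two minor points:
\begin{itemize}
\item The $\tau_h$ upper bound is not needed to absorb $s_{\mathrm{MQ}}(\eta,\eta)$, and the paper's proof never uses it. The inequality $\sum_k c_k h^{2p}\le\Sigma(p,L)$ holds simply because $H^{(k)}\ge h$. Note also that $c_k\ge c_L$, not $c_k\le c_L$.
\item Your splitting $c=\sigma+\tfrac12\operatorname{div}\mathbf b$ is more careful than the paper's proof. The paper bounds $(c\eta,\xi_h)$ directly by $\|\sigma\|_{L^\infty(\Omega)}^{1/2}\|\eta\|_{L^2(\Omega)}\|\xi_h\|_{\mathrm{MQ}}$. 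That bound silently drops the $\operatorname{div}\mathbf b$ contribution, which you instead place in $C$; strictly, this is inconsistent with $C=C(p,L,C_\Pi)$ in either version.
\end{itemize}
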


\begin{proof}
Let $w_h = \mathcal{Q}_{V_h} u \in V_h$ be the fine-level quasi-interpolant of $u$, and set $\eta = u - w_h$ and $\xi_h = u_h - w_h \in V_h$.

First, we establish the error equation. From Lemma~\ref{lem:galerkin_orth}, we have $a_{\mathrm{MQ}}(\eta - \xi_h, v_h) = s_{\mathrm{MQ}}(u, v_h)$. Setting $v_h = \xi_h$ yields:
\begin{equation}\label{eq:error_eq}
\|\xi_h\|_{\mathrm{MQ}}^2 = a_{\mathrm{MQ}}(\xi_h, \xi_h) = a_{\mathrm{MQ}}(\eta, \xi_h) - s_{\mathrm{MQ}}(u, \xi_h).
\end{equation}

Next, we bound the standard bilinear form term $a(\eta, \xi_h)$. Applying the Cauchy-Schwarz and Poincaré-Friedrichs inequalities gives:
\begin{equation}\label{eq:bilinear_bound}
|a(\eta, \xi_h)| \le \left(\varepsilon^{1/2}|\eta|_{H^1(\Omega)} + \frac{\|\mathbf{b}\|_{L^\infty(\Omega)}}{\sigma_0^{1/2}}|\eta|_{H^1(\Omega)} + \|\sigma\|_{L^\infty(\Omega)}^{1/2}\|\eta\|_{L^2(\Omega)}\right) \|\xi_h\|_{\mathrm{MQ}}.
\end{equation}

For the stabilization term $s_{\mathrm{MQ}}(\eta, \xi_h)$, we apply the Cauchy--Schwarz inequality to obtain $|s_{\mathrm{MQ}}(\eta, \xi_h)| \le s_{\mathrm{MQ}}(\eta, \eta)^{1/2} \cdot \|\xi_h\|_{\mathrm{MQ}}$. For each level $k$, using the $H^1$-stability of the projection, we have:
\begin{align*}
\|\mathbf{b}\cdot\nabla(\eta - \Pi_k^p \eta)\|_{L^2(\Omega)} 
&\le \|\mathbf{b}\|_{L^\infty(\Omega)}|\eta - \Pi_k^p \eta|_{H^1(\Omega)} \\
&\le \|\mathbf{b}\|_{L^\infty(\Omega)}(1+C_\Pi)|\eta|_{H^1(\Omega)}.
\end{align*}
Therefore, the stabilization term is bounded by:
\begin{equation}\label{eq:sMQ_eta}
s_{\mathrm{MQ}}(\eta, \eta) \le \tau_h\, \|\mathbf{b}\|_{L^\infty(\Omega)}^2\, (1+C_\Pi)^2\, |\eta|_{H^1(\Omega)}^2 \sum_{k=1}^L c_k.
\end{equation}

To bound the consistency error $s_{\mathrm{MQ}}(u, \xi_h)$, we use Lemma~\ref{lem:consistency}:
\[
|s_{\mathrm{MQ}}(u, \xi_h)| \le C_\Pi\, \|\mathbf{b}\|_{L^\infty(\Omega)}\, \tau_h^{1/2}\, \Sigma(p,L)^{1/2}\, \|u\|_{H^{p+1}(\Omega)} \cdot \|\xi_h\|_{\mathrm{MQ}}.
\]

Combining these estimates in the error equation \eqref{eq:error_eq}, and recalling that $a_{\mathrm{MQ}}(\eta, \xi_h) = a(\eta, \xi_h) + s_{\mathrm{MQ}}(\eta, \xi_h)$, we deduce:
\begin{align}
\|\xi_h\|_{\mathrm{MQ}} &\le \varepsilon^{1/2}|\eta|_{H^1(\Omega)} + \frac{\|\mathbf{b}\|_{L^\infty(\Omega)}}{\sigma_0^{1/2}}|\eta|_{H^1(\Omega)} + \|\sigma\|_{L^\infty(\Omega)}^{1/2}\|\eta\|_{L^2(\Omega)} \label{eq:xi_bound} \\
&\quad + s_{\mathrm{MQ}}(\eta,\eta)^{1/2} + C_\Pi\|\mathbf{b}\|_{L^\infty(\Omega)}\tau_h^{1/2}\Sigma(p,L)^{1/2}\|u\|_{H^{p+1}(\Omega)}. \nonumber
\end{align}

By the quasi-interpolant approximation property, the interpolation errors are bounded as follows:
\begin{align}
|\eta|_{H^1(\Omega)} &= |u - \mathcal{Q}_{V_h} u|_{H^1(\Omega)} \le C_\Pi\, h^p\, \|u\|_{H^{p+1}(\Omega)}, \label{eq:interp1} \\
\|\eta\|_{L^2(\Omega)} &= \|u - \mathcal{Q}_{V_h} u\|_{L^2(\Omega)} \le C_\Pi\, h^{p+1}\, \|u\|_{H^{p+1}(\Omega)}. \label{eq:interp0}
\end{align}

Finally, by the triangle inequality, $\|u - u_h\|_{\mathrm{MQ}} \le \|\eta\|_{\mathrm{MQ}} + \|\xi_h\|_{\mathrm{MQ}}$. Substituting the bounds \eqref{eq:interp1}--\eqref{eq:interp0} into the estimates for $\|\eta\|_{\mathrm{MQ}}$ and $\|\xi_h\|_{\mathrm{MQ}}$ yields the final result.
\end{proof}

\begin{remark}[On the choice of stabilization parameter $\tau_h$]
The upper bound on $\tau_h$ in Theorem~\ref{thm:apriori}, inspired by Knobloch~\cite{knobloch_generalization_2010}, is essential for the MQSD inf-sup stability established in Section~\ref{sec:infsup_verification}. In practice, the scaling $\tau_h = c_b\, h$ with a moderate constant $c_b$ naturally satisfies this bound in the advection-dominated regime ($\varepsilon \ll h\|\mathbf{b}\|_{L^\infty}$), since the theoretical condition then reduces to $\tau_h \le C_\tau\, c_L^{-1}\, h / \|\mathbf{b}\|_{L^\infty}$. Throughout the numerical experiments in Section~\ref{sec:numerical}, we adopt $c_b = 0.1$ for one-dimensional problems and $c_b = 0.01$ for two-dimensional problems. These values are consistent with the theoretical bound for all test configurations considered and yield robust stabilization across the full range of Péclet numbers.
\end{remark}

\begin{remark}[On the convergence rate in the MQ norm]\label{rem:rate}
The convergence rate $O(h^p)$ in the natural MQ norm is the standard result for projection-based stabilization methods; the same rate appears in the coercivity-based analysis of LPS ~\cite{knobloch_local_2009, matthies_local_2015, knobloch_generalization_2010}. In LPS, the improved rate $O(h^{p+1/2})$ is obtained by supplementing coercivity with a discrete inf-sup condition that controls the full streamline derivative (see~\cite{knobloch_generalization_2010}, Section~4). The analogous enhanced stability for the present method is established in Section~\ref{sec:infsup_verification}: Theorem~\ref{thm:inf_sup_mqsd} proves a discrete inf-sup condition in the MQSD norm, which provides explicit control of the full streamline derivative $\mathbf{b}\cdot\nabla u_h$. This enhanced stability is the key ingredient that, following the same argument structure as in~\cite{knobloch_generalization_2010} (Section~4), yields the improved $O(h^{p+1/2})$ convergence rate within the multilevel quasi-interpolant framework. However, it should be noted that the rigorous mathematical proof of this enhanced MQSD stability provided in Section~\ref{sec:infsup_verification} is currently restricted to the one-dimensional setting with a constant advection field.
\end{remark}

\section{Enhanced Stability and Streamline Derivative Control}
\label{sec:infsup_verification}

In this section, we establish a discrete inf-sup condition for the bilinear form $a_{\mathrm{MQ}}$ with respect to the enhanced MQSD norm, which provides explicit control of the full streamline derivative $\mathbf{b}\cdot\nabla u_h$. This enhanced stability goes beyond the coercivity-based analysis of Section~4 and is the key ingredient for improved convergence estimates, as discussed in Remark~\ref{rem:rate}.

We present a rigorous proof in the one-dimensional setting with constant advection, where the essential algebraic structure of the multilevel stabilization is most transparent. The robust convergence and effective reduction of oscillations observed in the numerical experiments of Section~\ref{sec:numerical} provide strong evidence that these stability properties extend to variable-coefficient and multidimensional configurations, although a fully general proof remains an open problem.

\subsection{Inf-sup condition hypothesis}
The key to proving enhanced streamline control is an inf-sup condition between the fine space $V_h$ and the coarse space $\mathcal{S}^{p-1}(\mathcal{M}_h^{(L)})$. To this end, we assume a \emph{dyadic coarsening hierarchy}. Specifically, starting from a uniform fine mesh $\mathcal{T}_h$ with mesh size $h$, each progressively coarsened mesh $\mathcal{M}_h^{(k)}$ for $k=1, \dots, L$ is constructed by removing every second interior knot from $\mathcal{M}_h^{(k-1)}$. Consequently, the mesh size at level $k$ is $H^{(k)} = 2^k h$, and each macroscopic element in the coarsest mesh $\mathcal{M}_h^{(L)}$ is the union of exactly $2^L$ fine elements from $\mathcal{T}_h$.

\begin{hypothesis}[\textbf{H1} -- Uniform $L^2$ inf-sup stability]
\label{hyp:inf_sup}
Let $V_h := \mathcal{S}^p(\mathcal{T}_h) \cap H_0^1(I)$ be the B-spline space of degree $p \ge 2$ on the fine mesh $\mathcal{T}_h$, and let $Q_L := \mathcal{S}^{p-1}(\mathcal{M}_h^{(L)})$ be the B-spline space of degree $p-1$ on the hierarchically coarsened mesh $\mathcal{M}_h^{(L)}$. Then there exists a constant $\alpha_{\mathrm{MQ}} > 0$, depending only on $p$ and the coarsening structure, but \emph{independent of $h$}, such that
\begin{equation}
\label{inf_sup1}
\sup_{v_h \in V_h \setminus \{0\}}
\frac{(v_h,\, q_h)_{0,I}}{\|v_h\|_{0,I}}
\;\ge\;
\alpha_{\mathrm{MQ}} \, \|q_h\|_{0,I}
\qquad
\forall\, q_h \in Q_L.
\end{equation}
\end{hypothesis}

\subsection{Empirical Validation via Dimension Condition}
To validate Hypothesis~\ref{hyp:inf_sup} for the dyadic coarsening strategy defined above, we note that the positivity of the inf-sup constant $\alpha_{\mathrm{MQ}}(h) > 0$ for each fixed $h$ can be guaranteed by invoking the Bressan--Sangalli dimension condition~\cite{bressan_isogeometric_2013}. The uniform $h$-independence of this constant is supported by the numerical evidence in Table~\ref{tab:infsup}. 

\begin{theorem}[Bressan--Sangalli \cite{bressan_isogeometric_2013}]
\label{thm:BS31}
Let $S_1$ and $S_2$ be two univariate spline spaces defined on the same domain $I$, with respective B-spline bases $\mathcal{B}_1$ and $\mathcal{B}_2$. If for every continuous subinterval $J \subset I$, the dimension condition
\begin{equation}
\label{eq:dim_cond}
\dim \{ B \in \mathcal{B}_1 \mid \operatorname{supp}(B) \cap J \neq \emptyset \} 
\;\ge\; 
\dim \{ C \in \mathcal{B}_2 \mid \operatorname{supp}(C) \subset J \}
\end{equation}
holds, then the $L^2$ coupling matrix between $S_1$ and $S_2$ has full row rank, meaning $S_2 \cap S_1^\perp = \{0\}$.
\end{theorem}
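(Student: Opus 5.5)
The plan is to argue on the coupling matrix $G=(g_{ij})$, with $g_{ij}=(C_i,B_j)_{0,I}$ for $C_i\in\mathcal{B}_2$ and $B_j\in\mathcal{B}_1$. Full row rank of $G$ is equivalent to $S_2\cap S_1^\perp=\{0\}$. It therefore suffices to exhibit, for $n_2:=\#\mathcal{B}_2$, a square $n_2\times n_2$ submatrix of $G$ with nonzero determinant. Order both bases by their supports, that is, by increasing left and right knot endpoints. Since B-splines are nonnegative, we have $g_{ij}>0$ exactly when $\operatorname{supp}C_i$ and $\operatorname{supp}B_j$ overlap on a set of positive measure, and $g_{ij}=0$ otherwise.

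\textbf{Step 1 (combinatorial matching).} I would first convert \eqref{eq:dim_cond} into Hall's condition for the bipartite graph in which $C_i$ is joined to $B_j$ when $g_{ij}>0$. Take any subset $A\subset\mathcal{B}_2$. Split $\bigcup_{C\in A}\operatorname{supp}C$ into connected components $J_1,\dots,J_r$, which are intervals, and apply \eqref{eq:dim_cond} to each $J_s$. The sets $\{B:\operatorname{supp}B\cap J_s\neq\emptyset\}$ need not be disjoint, so I would handle this by working with open interiors and refining the count. Alternatively, I would prove the monotone version below directly by a greedy left-to-right assignment, which exploits the interval structure.

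The greedy assignment goes as follows: process the $C_i$ in increasing order, and assign to each one the leftmost still-unused $B_j$ whose support meets $\operatorname{supp}C_i$ in an open set. The dimension condition applied to $J=[\min\operatorname{supp}C_1,\max\operatorname{supp}C_i]$ guarantees that this never fails. The outcome is a strictly increasing map $i\mapsto j_i$ with $g_{i j_i}>0$ for all $i$.

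\textbf{Step 2 (total positivity).} Write $g_{ij}=\int_I C_i(x)B_j(x)\,dx$. Both kernels $(i,x)\mapsto C_i(x)$ and $(j,x)\mapsto B_j(x)$ are totally positive, since B-spline bases form totally positive systems (Karlin). The basic composition formula (continuous Cauchy--Binet) then gives
\[
\det\big(g_{i\,j_{i'}}\big)_{i,i'=1}^{n_2}
=\int_{x_1<\cdots<x_{n_2}}\det\big(C_i(x_k)\big)\,\det\big(B_{j_{i'}}(x_k)\big)\,dx_1\cdots dx_{n_2}.
\]
Each integrand factor is nonnegative by total positivity.

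\textbf{Step 3 (strict positivity; main obstacle).} It remains to show that the integrand is positive on a set of positive measure. By the Schoenberg--Whitney theorem, $\det(C_i(x_k))>0$ holds iff $C_k(x_k)>0$ for all $k$, and similarly for the $B_{j_k}$. So I need ordered points $x_1<\cdots<x_{n_2}$ with $x_k$ lying in the open interior of $\operatorname{supp}C_k\cap\operatorname{supp}B_{j_k}$. These intersections are nonempty open intervals by Step 1, and their endpoints are nondecreasing in $k$ because both $i\mapsto C_i$ and $i\mapsto B_{j_i}$ are monotone. This should allow choosing increasing $x_k$, and then small perturbations give a set of positive measure.

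I expect this ordering argument to be the delicate point. Two intersections may coincide or nest, for instance when several $C_i$ share a knot interval. In that case I would first try choosing $x_k$ inductively as the midpoint of the portion of $\operatorname{supp}C_k\cap\operatorname{supp}B_{j_k}$ lying to the right of $x_{k-1}$. If that portion is empty, I would return to Step 1 and use the local dimension count on the interval where the crowding occurs to modify the matching. With the minor shown positive, $G$ has full row rank, and hence $S_2\cap S_1^\perp=\{0\}$.
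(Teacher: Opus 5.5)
The paper gives no proof of this statement; it quotes it from \cite{bressan_isogeometric_2013}. Your argument therefore has to stand on its own. Its architecture is the right one: $g_{ij}>0$ exactly on overlapping supports, the basic composition formula for the totally positive B-spline kernels, and Schoenberg--Whitney applied to $(C_i)$ and to the subsequence $(B_{j_k})$. Step~3, which you flag as the main obstacle, is in fact automatic. Let $O_k=(\alpha_k,\beta_k)$ be the open overlap of $\operatorname{supp}C_k$ and $\operatorname{supp}B_{j_k}$, with $\alpha_k,\beta_k$ nondecreasing. Then $x_{k-1}<\beta_{k-1}\le\beta_k$, so $(\max\{x_{k-1},\alpha_k\},\beta_k)$ is never empty. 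Hence your midpoint choice always succeeds, you never need to revisit the matching, and the admissible set $\{x_1<\dots<x_{n_2},\ x_k\in O_k\}$ is open and nonempty.

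The genuine gap is in Step~1, and it has two parts.

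\textbf{The greedy justification uses the wrong interval.} A failure at $C_i$ means every $B$ overlapping $C_i$ is already used. The count on $J=[\min\operatorname{supp}C_1,\max\operatorname{supp}C_i]$ only produces some unused $B$ meeting $J$. That $B$ may lie entirely to the left of $\operatorname{supp}C_i$, because an earlier $C_k$ skipped it for one further left, so there is no contradiction. The fix is as follows.
\begin{itemize}
\item Write $\{\lambda_i,\dots,\rho_i\}$ for the indices of the $B$'s overlapping $C_i$. This set is contiguous, with $\lambda_i,\rho_i$ nondecreasing, so the greedy gives $j_i=\max\{\lambda_i,j_{i-1}+1\}$.
\item If $j_i>\rho_i$, let $a\le i-1$ be the largest index with $j_a=\lambda_a$; it exists since $j_1=\lambda_1$.
\item Then $\rho_i\le j_{i-1}=\lambda_a+(i-1-a)$.
\item On $J=[\min\operatorname{supp}C_a,\max\operatorname{supp}C_i]$, the $B$'s whose support meets the interior of $J$ are exactly $B_{\lambda_a},\dots,B_{\rho_i}$, so there are at most $i-a$ of them. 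Meanwhile $C_a,\dots,C_i$ all have support in $J$, which is the contradiction.
\end{itemize}

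\textbf{The convention in \eqref{eq:dim_cond} must be fixed explicitly.} The contradiction requires that, for closed $J$, the left-hand side counts only those $B$ whose support meets the \emph{interior} of $J$. Your phrase ``working with open interiors'' has to become this precise convention, because under the literal reading, where touching $J$ at an endpoint counts, the statement is false. Take $I=[0,3]$, $\mathcal{B}_2=\{\chi_{[0,1)},\chi_{[1,2)},\chi_{[2,3]}\}$ and $\mathcal{B}_1=\{\chi_{[0,2)},\chi_{[2,5/2)},\chi_{[5/2,3]}\}$. Under the touching reading the condition holds for every $J$, yet $\chi_{[0,1)}-\chi_{[1,2)}\in S_2\cap S_1^\perp$. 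Under the interior reading, $J=[0,2]$ gives $1<2$, as it should.

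With these two corrections your proof goes through.
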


\begin{theorem}[Positivity of the discrete inf-sup constant]
\label{thm:positivity}
Assume the hierarchically coarsened mesh $\mathcal{M}_h^{(L)}$ is constructed via dyadic coarsening of $\mathcal{T}_h$. For each $h > 0$, there exists $\alpha_{\mathrm{MQ}}(h) > 0$ such that
\begin{equation}
\inf_{q_h \in Q_L \setminus \{0\}} \sup_{v_h \in V_h \setminus \{0\}}
\frac{(v_h, q_h)_{0,I}}{\|v_h\|_{0,I} \|q_h\|_{0,I}} \ge \alpha_{\mathrm{MQ}}(h).
\end{equation}
\end{theorem}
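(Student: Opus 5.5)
The plan is to reduce the claim to a finite-dimensional linear-algebra statement and then invoke Theorem~\ref{thm:BS31}. For fixed $h$, both $V_h$ and $Q_L$ are finite-dimensional. The map $q_h \mapsto \sup_{v_h} (v_h,q_h)_{0,I}/(\|v_h\|_{0,I}\|q_h\|_{0,I})$ is continuous and homogeneous of degree zero, so its infimum over $Q_L\setminus\{0\}$ is attained on the compact unit sphere of $Q_L$. That infimum is therefore strictly positive if and only if no nonzero $q_h\in Q_L$ is $L^2$-orthogonal to all of $V_h$, i.e.\ if and only if $Q_L\cap V_h^{\perp}=\{0\}$. Equivalently, the coupling matrix $B_{ij}=(N_j^{V},N_i^{Q})_{0,I}$ has full row rank $\dim Q_L$. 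The constant $\alpha_{\mathrm{MQ}}(h)$ can then be identified with the smallest singular value of $M_Q^{-1/2}BM_V^{-1/2}$, where $M_Q$ and $M_V$ are the mass matrices. This positivity holds for each $h$ but is not uniform in $h$, which is exactly what the statement claims.

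To obtain full row rank, I apply Theorem~\ref{thm:BS31} with $S_1=V_h$ and $S_2=Q_L$. The task is to verify the dimension condition \eqref{eq:dim_cond} for every subinterval $J\subset I$. It suffices to take $J$ to be a union of consecutive coarse knot spans. Shrinking $J$ to the smallest coarse-knot-aligned interval containing the same fully supported coarse B-splines leaves the right-hand side unchanged and can only decrease the left-hand side.

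So let $J$ consist of $m\ge 1$ consecutive coarse elements, which makes it a union of $2^L m$ fine elements. On the right-hand side, a degree-$(p-1)$ B-spline on $\mathcal{M}_h^{(L)}$ has support spanning $p$ coarse elements in the interior, and fewer near the open-knot boundary. The number of coarse basis functions with support contained in $J$ is therefore at most $\max(m-p+1,0)$ for interior $J$, and at most about $m$ when $J$ touches $\partial I$, where the clamped boundary functions have short support.

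On the left-hand side, the number of fine degree-$p$ B-splines whose support meets the interior of $J$ is $2^L m + p$ for interior $J$. This count must be reduced by the two boundary functions removed by the homogeneous Dirichlet condition when $J$ touches $\partial I$. Since $2^L\ge 2$ and $p\ge 2$, we get $2^Lm+p-2\ge 2m \ge m$ in every case, and the inequality has a wide margin.

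The main obstacle is the careful boundary bookkeeping. The open knot vectors create shortened supports for the first and last $p$ B-splines, and the $H^1_0$ constraint deletes the two end functions of $V_h$. A $J$ adjacent to $\partial I$, or $J=I$ itself, must still be checked. For $J=I$ the condition reads $\dim V_h = 2^L N + p - 2 \ge N + p - 1 = \dim Q_L$, where $N$ is the number of coarse elements. This holds because $(2^L-1)N\ge 1$.

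I would organize the verification by cases: $J$ interior, $J$ touching exactly one endpoint, and $J=I$. In each case I count supports explicitly and use $2^L m\ge 2m$ to absorb the deficit. With \eqref{eq:dim_cond} established, Theorem~\ref{thm:BS31} yields $Q_L\cap V_h^\perp=\{0\}$, and the compactness argument above gives $\alpha_{\mathrm{MQ}}(h)>0$.
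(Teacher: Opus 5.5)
Your proposal is correct and follows the same route as the paper: the Bressan--Sangalli dimension condition gives $Q_L\cap V_h^\perp=\{0\}$, and finite-dimensionality then yields $\alpha_{\mathrm{MQ}}(h)>0$ (you use compactness of the unit sphere, the paper uses the norm $q_h\mapsto\|P_{V_h}q_h\|_{0,I}$). The only difference is that the paper reduces to $L=1$ via the nesting $Q_L\subset Q_1$ and simply asserts that the dimension condition holds, whereas you check it directly at level $L$ with explicit interior, one-endpoint and $J=I$ counts, which supplies the bookkeeping the paper omits.
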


\begin{proof}
Because the coarse spaces are nested ($Q_L \subset Q_1$ for $L \ge 1$), it suffices to verify the dimension condition for the basic coarsening level $L=1$. The Bressan--Sangalli dimension condition requires that for any continuous subinterval $J \subset I$, the number of fine basis functions intersecting $J$ is greater than or equal to the number of coarse basis functions completely supported in $J$. This algebraic bound holds globally for dyadic meshes. 

Consequently, the $L^2$ scalar product between $V_h$ and $Q_L$ has full row rank, meaning $Q_L \cap V_h^\perp = \{0\}$.
Let $P_{V_h}: L^2(I) \to V_h$ denote the $L^2$-orthogonal projection onto the fine space. Since $Q_L \cap V_h^\perp = \{0\}$, the mapping $q_h \mapsto \|P_{V_h} q_h\|_{0,I}$ defines a valid norm on the finite-dimensional space $Q_L$. By the equivalence of norms in finite-dimensional spaces, there exists a strictly positive constant $\alpha_{\mathrm{MQ}}(h)$ such that 
\[
\|P_{V_h} q_h\|_{0,I} \ge \alpha_{\mathrm{MQ}}(h) \|q_h\|_{0,I} \qquad \forall q_h \in Q_L.
\]
By the standard property of the $L^2$ orthogonal projection, we have $\sup_{v_h \in V_h \setminus \{0\}} \frac{(v_h, q_h)_{0,I}}{\|v_h\|_{0,I}} = \|P_{V_h} q_h\|_{0,I}$, which concludes the proof.
\end{proof}

To verify the asymptotic $h$-independence of the constant, the exact discrete inf-sup constant, denoted by $\hat{\beta}(h) = \alpha_{\mathrm{MQ}}(h)$, was computed numerically. The computations were executed across varying polynomial degrees $p \in \{2, 3\}$ and local refinement levels $L \in \{1, 2, 3\}$. The results are summarized in Table~\ref{tab:infsup}.

\begin{table}[H]
\centering
\caption{Computed inf-sup constant $\hat{\beta}(h)$ for varying mesh refinement (degree $p$, level $L$, $n$ fine elements) using the dyadic coarsening strategy.}
\label{tab:infsup}
\begin{tabular}{ccccccccc}
\hline
$p$ & $L$ & $n=8$ & $n=16$ & $n=32$ & $n=64$ & $n=128$ & $n=256$ & $n=512$ \\
\hline
2 & 1 & 0.8641 & 0.8657 & 0.8657 & 0.8657 & 0.8657 & 0.8657 & 0.8657 \\
2 & 2 & 0.9218 & 0.9319 & 0.9327 & 0.9327 & 0.9327 & 0.9327 & 0.9327 \\
2 & 3 & 0.9422 & 0.9617 & 0.9666 & 0.9670 & 0.9670 & 0.9670 & 0.9670 \\
3 & 1 & 0.8246 & 0.8296 & 0.8297 & 0.8297 & 0.8297 & 0.8297 & 0.8297 \\
3 & 2 & 0.8992 & 0.9143 & 0.9166 & 0.9167 & 0.9167 & 0.9167 & 0.9167 \\
3 & 3 & 0.9255 & 0.9509 & 0.9580 & 0.9591 & 0.9592 & 0.9592 & 0.9592 \\
\hline
\end{tabular}
\end{table}

A notable feature of Table~\ref{tab:infsup} is that the computed inf-sup constants converge rapidly to a fixed value as $n$ increases, confirming the $h$-independence of $\alpha_{\mathrm{MQ}}$. Moreover, the constant improves significantly with the number of levels $L$: for $L=3$ and $p=3$, the asymptotic value reaches $\alpha_{\mathrm{MQ}} \approx 0.96$, exhibiting negligible $h$-dependence from $n=32$ onward. This suggests that the multilevel hierarchy inherently reinforces the inf-sup stability, a property that has no counterpart in classical two-level methods.

\subsection{Inf-Sup Stability in the MQSD Norm}

We show that under Hypothesis~\ref{hyp:inf_sup}, the method is stable with respect to the enhanced norm
\[
\| v \|_{\mathrm{MQSD}}
=
\left(
\| v \|_G^2
+
s_{\mathrm{MQ}}(v,v)
+
\tau_h c_L \| \mathbf{b} \cdot \nabla v \|_{0,I}^2
\right)^{1/2}.
\]
The abbreviation \mbox{SD} refers to the additional streamline derivative contribution.

\begin{theorem}[MQSD Stability]
\label{thm:inf_sup_mqsd}
Assume $\mathbf{b}$ is constant. Let the stabilization parameter $\tau_h$ satisfy
\begin{equation}
0 \le \tau_h \le C_\tau \, c_L^{-1} \, \frac{h^2}{\max\{\varepsilon, \, h \|\mathbf{b}\|_{0,\infty}\}},
\label{eq:stabilization_parameter}
\end{equation}
where $C_\tau > 0$ is a positive constant.
Then the bilinear form $a_{\mathrm{MQ}}$ satisfies the discrete inf-sup condition
\begin{equation}
\label{inf_sup2}
\sup_{v_h \in V_h} \frac{a_{\mathrm{MQ}}(u_h, v_h)}{\|v_h\|_{\mathrm{MQSD}}} \ge \beta \, \|u_h\|_{\mathrm{MQSD}} 
\quad \forall u_h \in V_h,
\end{equation}
where $\beta > 0$ is a constant independent of $h$ and $\varepsilon$.
\end{theorem}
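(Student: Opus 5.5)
The plan follows the classical LPS inf-sup argument of Knobloch~\cite{knobloch_generalization_2010}. Given $u_h\in V_h$, I will build a test function $v_h = u_h + \gamma\, z_h$, where $z_h\in V_h$ recovers the coarse part of the streamline derivative. Set $w := b\,u_h'$ (here $\mathbf b=b$ is a constant scalar in 1D). Then $w\in\mathcal S^{p-1}(\mathcal T_h)$. Let $q_h\in Q_L=\mathcal S^{p-1}(\mathcal M_h^{(L)})$ be a coarse approximation of $w$; the natural choice is $q_h := b\,(\Pi_L^p u_h)'$, which is exactly what the stabilization leaves uncontrolled.

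\textbf{Step 1.} Hypothesis~\ref{hyp:inf_sup} yields $z_h\in V_h$ with $(z_h,q_h)=\|q_h\|_0^2$ and $\|z_h\|_0\le \alpha_{\mathrm{MQ}}^{-1}\|q_h\|_0$. I will scale it as $\tilde z_h=\tau_h c_L z_h$.

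\textbf{Step 2.} Expand $a_{\mathrm{MQ}}(u_h,\tilde z_h)$. The advective term is
\[
(b u_h',\tilde z_h) = \tau_h c_L\|q_h\|_0^2 + \tau_h c_L\bigl(b(u_h-\Pi_L^pu_h)',z_h\bigr).
\]
The second piece is bounded by $\tau_h c_L\,\|b(u_h-\Pi_L^pu_h)'\|_0\,\alpha_{\mathrm{MQ}}^{-1}\|q_h\|_0$. Its first factor is controlled by $s_{\mathrm{MQ}}(u_h,u_h)^{1/2}(\tau_hc_L)^{-1/2}$, so Young's inequality absorbs it.

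The remaining contributions are $\varepsilon(u_h',\tilde z_h')$, $(cu_h,\tilde z_h)$ and $s_{\mathrm{MQ}}(u_h,\tilde z_h)$. Each is estimated by Cauchy--Schwarz and the inverse inequality (Theorem~\ref{Inv_splines}), $\|z_h'\|_0\le Ch^{-1}\|z_h\|_0$. The parameter bound \eqref{eq:stabilization_parameter} then gives
\[
\tau_h c_L\,\varepsilon h^{-2}\le C_\tau
\quad\text{and}\quad
\tau_hc_L\, b^2 h^{-1}\cdot h^{-1}\tau_h c_k\lesssim \tau_h c_k\, b\, h^{-1}\le C.
\]
For the stabilization term, $H^1$-stability of $\Pi_k^p$ combined with the inverse estimate gives
\[
s_{\mathrm{MQ}}(\tilde z_h,\tilde z_h)\le C\,\tau_h\textstyle\sum_k c_k\, b^2 h^{-2}(\tau_hc_L)^2\|q_h\|^2 \le C\,\tau_h c_L\|q_h\|_0^2 .
\]
Putting these together, I expect
\[
a_{\mathrm{MQ}}(u_h,\tilde z_h)\ge \tfrac12\tau_hc_L\|q_h\|_0^2 - C\|u_h\|_{\mathrm{MQ}}^2 .
\]

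\textbf{Step 3.} Take $v_h=u_h+\gamma\tilde z_h$ with $\gamma$ small. Coercivity (Theorem~\ref{thm:coercivity}) gives
\[
a_{\mathrm{MQ}}(u_h,v_h)\ge c\bigl(\|u_h\|_{\mathrm{MQ}}^2+\tau_hc_L\|q_h\|_0^2\bigr).
\]
By the triangle inequality,
\[
\tau_hc_L\|bu_h'\|_0^2\le 2\tau_hc_L\|q_h\|^2+2\tau_hc_L\|b(u_h-\Pi_L^pu_h)'\|^2 ,
\]
and the last term is at most $2s_{\mathrm{MQ}}(u_h,u_h)$ because $c_L\le\sum_kc_k$ (this still needs care, since $c_L$ appears in the sum). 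This gives a lower bound by $\|u_h\|_{\mathrm{MQSD}}^2$. Finally I will show $\|v_h\|_{\mathrm{MQSD}}\le C\|u_h\|_{\mathrm{MQSD}}$. This only requires bounding $\|\tilde z_h\|_{\mathrm{MQSD}}$ using the same inverse and parameter estimates as in Step 2; in particular
\[
\tau_hc_L\, b^2\|\tilde z_h'\|^2\lesssim (\tau_hc_L b/h)^2\,\tau_hc_L\|q_h\|^2 ,
\]
and $\|\sigma^{1/2}\tilde z_h\|^2\lesssim \tau_hc_L\|q_h\|^2$ (with $\tau_hc_L\lesssim h$).

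\textbf{Main obstacle.} The hard step is Step 2: the fluctuation-versus-coarse-derivative split and the reaction and diffusion cross terms must be controlled uniformly in $\varepsilon$. In particular, $\|\tilde z_h\|_0\lesssim\tau_hc_L\|q_h\|$ has to be paired with $\|u_h\|_0$, which only appears weighted by $\sigma_0$, so constants depend on $\|c\|_\infty/\sigma_0$ and on $\tau_hc_L\lesssim h/b$. I would first try Young's inequality with weight $\gamma$ to absorb all such terms into $\|u_h\|_{\mathrm{MQ}}^2$.
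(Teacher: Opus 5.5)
Your proposal is correct and follows essentially the paper's own argument. Both proofs build $z_h$ from Hypothesis~H1 against $q_h=b(\Pi_L^p u_h)'$ and scale it by $\tau_h c_L$. Both then split the advective term into the coarse part plus the level-$L$ fluctuation and bound the diffusion, reaction and stabilization cross terms using inverse estimates and the upper bound on $\tau_h$. Finally, both combine with $u_h$; the paper's $v_h=4z_h+(1+4C_4)u_h$ is just a rescaling of your $u_h+\gamma\tilde z_h$. The only deviation is minor: you handle the reaction term by Young's inequality and $\tau_h c_L\le C_\tau h/|b|$, whereas the paper uses an inverse estimate and the $L^2$-stability of $\Pi_L^p$. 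Also, the point you flag as "needing care" is immediate: $\tau_h c_L\|b(u_h-\Pi_L^p u_h)'\|_0^2$ is exactly the $k=L$ summand of $s_{\mathrm{MQ}}(u_h,u_h)$, and all other summands are nonnegative.
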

 
\begin{proof}
Let $u_h \in V_h$. We construct $v_h \in V_h$ such that
\[
a_{\mathrm{MQ}}(u_h, v_h) \ge C_1 \| u_h \|_{\mathrm{MQSD}}^2 \quad \text{and} \quad \| v_h \|_{\mathrm{MQSD}} \le C_2 \| u_h \|_{\mathrm{MQSD}},
\]
which implies the desired inf-sup condition.

Since $\mathbf{b}$ is constant and $\Pi_L^p u_h \in \mathcal{S}^p(\mathcal{M}_h^{(L)})$, its derivative $\mathbf{b} \cdot \nabla (\Pi_L^p u_h)$ is a spline of degree $p-1$ on the coarse mesh $\mathcal{M}_h^{(L)}$. Thus, it follows exactly that $\mathbf{b} \cdot \nabla (\Pi_L^p u_h) \in Q_L$.

By Hypothesis~\ref{hyp:inf_sup} (\textbf{H1}), we define $z_h \in V_h$ such that
\begin{align}
(z_h, u_H) &= c_L \tau_h (\mathbf{b} \cdot \nabla (\Pi_L^p u_h), u_H) \quad \forall u_H \in Q_L, \label{eq:zh1} \\
\| z_h \|_0 &\le c_L \alpha_{\mathrm{MQ}}^{-1} \tau_h \| \mathbf{b} \cdot \nabla (\Pi_L^p u_h) \|_0. \label{eq:zh2}
\end{align}

Testing \eqref{eq:zh1} with $u_H = \mathbf{b} \cdot \nabla (\Pi_L^p u_h) \in Q_L$, we obtain:
\[
(z_h, \mathbf{b} \cdot \nabla (\Pi_L^p u_h)) = c_L \tau_h \| \mathbf{b} \cdot \nabla (\Pi_L^p u_h) \|_0^2.
\]

We split the advective term evaluated at $(u_h, z_h)$ using the multiscale decomposition:
\begin{align*}
(\mathbf{b} \cdot \nabla u_h, z_h) &= (z_h, \mathbf{b} \cdot \nabla (\Pi_L^p u_h)) + (z_h, \mathbf{b} \cdot \nabla (u_h - \Pi_L^p u_h)) \\
&= c_L \tau_h \| \mathbf{b} \cdot \nabla (\Pi_L^p u_h) \|_0^2 + (z_h, \mathbf{b} \cdot \nabla (u_h - \Pi_L^p u_h)).
\end{align*}

Hence, the MQ-stabilized bilinear form can be written as:
\[
a_{\mathrm{MQ}}(u_h, z_h) = c_L \tau_h \| \mathbf{b} \cdot \nabla (\Pi_L^p u_h) \|_0^2 + (a) + (b) + (c) + s_{\mathrm{MQ}}(u_h, z_h),
\]
where
\begin{align*}
(a) &= (z_h, \mathbf{b} \cdot \nabla (u_h - \Pi_L^p u_h)), \\
(b) &= \varepsilon (\nabla u_h, \nabla z_h), \\
(c) &= (c u_h, z_h).
\end{align*}

Using \eqref{eq:zh2}, inverse inequalities, and the upper bound on $\tau_h$, we obtain:
\begin{equation}
\varepsilon^{1/2} | z_h |_1 \le C_\tau^{1/2} C_{inv} \alpha_{\mathrm{MQ}}^{-1} (c_L \tau_h)^{1/2} \| \mathbf{b} \cdot \nabla (\Pi_L^p u_h) \|_0. \label{eq:zh_bound3}
\end{equation}

Now we bound the terms $(a)$, $(b)$, and $(c)$ using the Cauchy-Schwarz inequality.

For $(a)$ and $(b)$:
\begin{align*}
|(a)| &\le \| z_h \|_0 \| \mathbf{b} \cdot \nabla (u_h - \Pi_L^p u_h) \|_0 \le \alpha_{\mathrm{MQ}}^{-1} (c_L \tau_h)^{1/2} \| \mathbf{b} \cdot \nabla (\Pi_L^p u_h) \|_0 \left( c_L \tau_h \| \mathbf{b} \cdot \nabla (u_h - \Pi_L^p u_h) \|_0^2 \right)^{1/2}, \\
|(b)| &\le \varepsilon | u_h |_1 | z_h |_1 \le C_\tau^{1/2} C_{inv} \alpha_{\mathrm{MQ}}^{-1} (c_L \tau_h)^{1/2} \| \mathbf{b} \cdot \nabla (\Pi_L^p u_h) \|_0 \left( \varepsilon | u_h |_1^2 \right)^{1/2}.
\end{align*}
Summing them and applying the discrete Cauchy-Schwarz inequality for sums ($x_1 y_1 + x_2 y_2 \le \sqrt{x_1^2 + x_2^2} \sqrt{y_1^2 + y_2^2}$):
\begin{align*}
|(a) + (b)| &\le \sqrt{2} \left( \alpha_{\mathrm{MQ}}^{-1} + C_\tau^{1/2} C_{inv} \alpha_{\mathrm{MQ}}^{-1} \right) (c_L \tau_h)^{1/2} \| \mathbf{b} \cdot \nabla (\Pi_L^p u_h) \|_0 \left[ \varepsilon |u_h|_1^2 + c_L \tau_h \| \mathbf{b} \cdot \nabla (u_h - \Pi_L^p u_h) \|_0^2 \right]^{1/2} \\
&\le \sqrt{2} \left( \alpha_{\mathrm{MQ}}^{-1} + C_\tau^{1/2} C_{inv} \alpha_{\mathrm{MQ}}^{-1} \right) (c_L \tau_h)^{1/2} \| \mathbf{b} \cdot \nabla (\Pi_L^p u_h) \|_0 \| u_h \|_{\mathrm{MQ}}.
\end{align*}
We define $C_{ab} = \sqrt{2} \left( \alpha_{\mathrm{MQ}}^{-1} + C_\tau^{1/2} C_{inv} \alpha_{\mathrm{MQ}}^{-1} \right)$.

For $(c)$, assuming $c \in L^\infty(\Omega)$, we use the bounds $\| \Pi_L^p u_h \|_0 \le C_{\Pi_L^p} \| u_h \|_0$ (where $C_{\Pi_L^p}$ is the stability constant for the projector $\Pi_L^p$), $\| u_h \|_0 \le \sigma_0^{-1/2} \| \sigma^{1/2} u_h \|_0$, and the condition $\tau_h \le c_L^{-1} C_\tau \frac{h}{\| \mathbf{b} \|_{0,\infty}}$:
\begin{align*}
|(c)| &\le \| c \|_{0,\infty} \| u_h \|_0 \| z_h \|_0 \\
&\le \frac{\| c \|_{0,\infty}}{\sigma_0^{1/2}} \| \sigma^{1/2} u_h \|_0 \left( c_L \tau_h \alpha_{\mathrm{MQ}}^{-1} \| \mathbf{b} \cdot \nabla (\Pi_L^p u_h) \|_0 \right) \\
&\le \frac{\| c \|_{0,\infty}}{\sigma_0^{1/2}} \alpha_{\mathrm{MQ}}^{-1} (c_L \tau_h) \| \mathbf{b} \|_{0,\infty} \| \nabla (\Pi_L^p u_h) \|_0 \| \sigma^{1/2} u_h \|_0 \\
&\le \frac{\| c \|_{0,\infty}}{\sigma_0^{1/2}} \alpha_{\mathrm{MQ}}^{-1} (c_L \tau_h) \| \mathbf{b} \|_{0,\infty} \left( C_{inv} h^{-1} C_{\Pi_L^p} \| u_h \|_0 \right) \| \sigma^{1/2} u_h \|_0 \\
&\le \frac{\| c \|_{0,\infty}}{\sigma_0} \alpha_{\mathrm{MQ}}^{-1} (c_L \tau_h) \| \mathbf{b} \|_{0,\infty} C_{inv} C_{\Pi_L^p} h^{-1} \| \sigma^{1/2} u_h \|_0^2 \\
&\le \frac{\| c \|_{0,\infty}}{\sigma_0} \alpha_{\mathrm{MQ}}^{-1} C_{inv} C_{\Pi_L^p} C_\tau \| \sigma^{1/2} u_h \|_0^2.
\end{align*}
This implies $|(c)| \le C_c \| u_h \|_{\mathrm{MQ}}^2$, where $C_c = \frac{\| c \|_{0,\infty}}{\sigma_0} \alpha_{\mathrm{MQ}}^{-1} C_{inv} C_{\Pi_L^p} C_\tau$.

Now let us estimate the term $s_{\mathrm{MQ}}(u_h, z_h)$:
$$
|s_{\mathrm{MQ}}\left(u_h, z_h\right)| \leq \sqrt{s_{\mathrm{MQ}}\left(u_h, u_h\right)} \sqrt{s_{\mathrm{MQ}}\left(z_h, z_h\right)} \le \| u_h \|_{\mathrm{MQ}} \sqrt{s_{\mathrm{MQ}}(z_h, z_h)}.
$$
For all $k \in \{1\dots L\}$, we have
\begin{align*}
\| \mathbf{b} \cdot \nabla (z_h - \Pi_k^p(z_h)) \|_0 &\le \|\mathbf{b} \|_{0,\infty} C_{inv} h^{-1} (1+C_{\Pi_k^p}) \| z_h \|_0 \\
&\le c_L \tau_h \|\mathbf{b} \|_{0,\infty} C_{inv} h^{-1} (1+C_{\Pi_k^p}) \alpha_{\mathrm{MQ}}^{-1} \| \mathbf{b} \cdot \nabla (\Pi_L^p u_h) \|_0.
\end{align*}
Using the condition $\tau_h \le C_\tau c_L^{-1} \frac{h}{\|\mathbf{b}\|_{0,\infty}}$, we get:
\begin{equation}
\| \mathbf{b} \cdot \nabla (z_h - \Pi_k^p(z_h)) \|_0 \le C_{\tau} C_{inv} (1+C_{\Pi_k^p}) \alpha_{\mathrm{MQ}}^{-1} \| \mathbf{b} \cdot \nabla (\Pi_L^p u_h) \|_0.
\end{equation}
Then:
\begin{equation}
s_{\mathrm{MQ}}(z_h, z_h) \le C_{s_{\mathrm{MQ}}}^2 c_L \tau_h \| \mathbf{b} \cdot \nabla (\Pi_L^p u_h) \|_0^2,
\end{equation}
where $C_{s_{\mathrm{MQ}}}^2 = C_{\tau}^2 C_{inv}^2 \alpha_{\mathrm{MQ}}^{-2} c_L^{-1} \sum_{k=1}^{L} c_k (1+C_{\Pi_k^p})^2$.
This yields:
\begin{equation}
|s_{\mathrm{MQ}}(u_h, z_h)| \le C_{s_{\mathrm{MQ}}} (c_L \tau_h)^{1/2} \| \mathbf{b} \cdot \nabla (\Pi_L^p u_h) \|_0 \| u_h \|_{\mathrm{MQ}}.
\end{equation}

Combining the bounds, we have:
\begin{align*}
a_{\mathrm{MQ}}(u_h, z_h) &\ge c_L \tau_h \| \mathbf{b} \cdot \nabla (\Pi_L^p u_h) \|_0^2 - |(a) + (b)| - |(c)| - |s_{\mathrm{MQ}}(u_h, z_h)| \\
&\ge c_L \tau_h \| \mathbf{b} \cdot \nabla (\Pi_L^p u_h) \|_0^2 - (C_{ab} + C_{s_{\mathrm{MQ}}}) (c_L \tau_h)^{1/2} \| \mathbf{b} \cdot \nabla (\Pi_L^p u_h) \|_0 \| u_h \|_{\mathrm{MQ}} - C_c \| u_h \|_{\mathrm{MQ}}^2.
\end{align*}
Using Young's inequality ($xy \le \frac{1}{2}x^2 + \frac{1}{2}y^2$) on the cross term, we subtract exactly $\frac{1}{2}$ of the stabilization term:
\begin{align*}
(C_{ab} + C_{s_{\mathrm{MQ}}}) (c_L \tau_h)^{1/2} \| \mathbf{b} \cdot \nabla (\Pi_L^p u_h) \|_0 \| u_h \|_{\mathrm{MQ}} \le \frac{1}{2} c_L \tau_h \| \mathbf{b} \cdot \nabla (\Pi_L^p u_h) \|_0^2 + \frac{1}{2} (C_{ab} + C_{s_{\mathrm{MQ}}})^2 \| u_h \|_{\mathrm{MQ}}^2.
\end{align*}
Thus,
\begin{align*}
a_{\mathrm{MQ}}(u_h, z_h) \ge \frac{1}{2} c_L \tau_h \| \mathbf{b} \cdot \nabla (\Pi_L^p u_h) \|_0^2 - C_3 \| u_h \|_{\mathrm{MQ}}^2,
\end{align*}
where $C_3 = \frac{1}{2} (C_{ab} + C_{s_{\mathrm{MQ}}})^2 + C_c$.

To recover the full streamline derivative, we apply the triangle inequality:
\[
c_L \tau_h \| \mathbf{b} \cdot \nabla u_h \|_0^2 \le 2 c_L \tau_h \| \mathbf{b} \cdot \nabla (\Pi_L^p u_h) \|_0^2 + 2 c_L \tau_h \| \mathbf{b} \cdot \nabla (u_h - \Pi_L^p u_h) \|_0^2.
\]
This implies:
\[
\frac{1}{2} c_L \tau_h \| \mathbf{b} \cdot \nabla (\Pi_L^p u_h) \|_0^2 \ge \frac{1}{4} c_L \tau_h \| \mathbf{b} \cdot \nabla u_h \|_0^2 - \frac{1}{2} s_{\mathrm{MQ}}(u_h, u_h).
\]

Substituting this lower bound back into the bilinear form yields:
\[
a_{\mathrm{MQ}}(u_h, z_h) \ge \frac{1}{4} c_L \tau_h \| \mathbf{b} \cdot \nabla u_h \|_0^2 - C_4 \| u_h \|_{\mathrm{MQ}}^2,
\]
where $C_4 = C_3 + \frac{1}{2}$.

Finally, define $v_h := 4 z_h + (1 + 4 C_4) u_h \in V_h$. We evaluate the bilinear form:
\begin{align*}
a_{\mathrm{MQ}}(u_h, v_h) &= 4 a_{\mathrm{MQ}}(u_h, z_h) + (1 + 4 C_4) a_{\mathrm{MQ}}(u_h, u_h) \\
&\ge 4 \left( \frac{1}{4} c_L \tau_h \| \mathbf{b} \cdot \nabla u_h \|_0^2 - C_4 \| u_h \|_{\mathrm{MQ}}^2 \right) + (1 + 4 C_4) \| u_h \|_{\mathrm{MQ}}^2 \\
&= c_L \tau_h \| \mathbf{b} \cdot \nabla u_h \|_0^2 - 4 C_4 \| u_h \|_{\mathrm{MQ}}^2 + \| u_h \|_{\mathrm{MQ}}^2 + 4 C_4 \| u_h \|_{\mathrm{MQ}}^2 \\
&= \| u_h \|_{\mathrm{MQ}}^2 + c_L \tau_h \| \mathbf{b} \cdot \nabla u_h \|_0^2 \\
&= \| u_h \|_{\mathrm{MQSD}}^2.
\end{align*}
Furthermore, since $z_h$ is bounded continuously by $u_h$, we have $\| v_h \|_{\mathrm{MQSD}} \le 4 \| z_h \|_{\mathrm{MQSD}} + (1 + 4 C_4) \| u_h \|_{\mathrm{MQSD}} \le C_V \| u_h \|_{\mathrm{MQSD}}$. Thus, the discrete inf-sup condition holds with $\beta = \frac{1}{C_V}$.
This completes the proof.
\end{proof}

\begin{remark}[Conditionality of MQSD Stability]
We emphasize that the enhanced stability established in Theorem~\ref{thm:inf_sup_mqsd} is conditional on Hypothesis~\ref{hyp:inf_sup}, which guarantees the $h$-independent inf-sup stability of the $L^2$ projection between the fine and coarse spaces.
\end{remark}
\section{Numerical Tests}
\label{sec:numerical}

While the theoretical framework and error analysis in the preceding sections were rigorously developed for the general advection-diffusion-reaction equation, a key strength of the proposed multilevel quasi-interpolant stabilization is its robustness across different flow regimes. To demonstrate this versatility, the following numerical experiments investigate not only the full equation but also singular limit cases where the diffusion vanishes ($\varepsilon = 0$). For instance, we explicitly include tests on a 1D advection-reaction problem (Section~\ref{sec:1d-advection-reaction}) and a 2D pure advection problem (Section~\ref{sec:2d-pure-advection}) to highlight the method's ability to seamlessly handle these challenging degenerate scenarios without requiring any structural modifications. Throughout all numerical experiments, we adopt the dyadic coarsening hierarchy introduced in Section~5.1.

Furthermore, it is important to note that the theoretical quasi-interpolant satisfying the optimal approximation estimates of Theorem~\ref{interpolation} is generally constructed using dual basis functions (see, e.g., \cite{cottrell_isogeometric_2009}), which often lack a simple, explicitly computable form. Fortunately, highly practical quasi-interpolants can be constructed following the framework proposed by Lyche et al.~\cite{lyche_local_1975, lyche_b-splines_2017}, as detailed in Section~\ref{sec:quasi_interpolant}. While this framework allows for various choices of local projection operators, in all our numerical experiments we specifically employ the Greville quasi-interpolant. For a given function $u$ and B-spline degree $p$, the Greville quasi-interpolant $\Pi_G^p$ is defined mathematically as:
\begin{equation}
\label{eq:greville_definition}
\Pi_G^p u(x) = \sum_{i} u(\bar{t}_i) N_i^p(x),
\end{equation}
where $N_i^p(x)$ are the B-spline basis functions and $\bar{t}_i$ are the Greville abscissae, defined as the knot averages:
\begin{equation}
\bar{t}_i = \frac{t_{i+1} + t_{i+2} + \dots + t_{i+p}}{p}.
\end{equation}
The Greville quasi-interpolant strictly follows the Lyche construction paradigm. We adopt it exclusively in our numerical implementation because it requires only simple point evaluations of the target function, making it a highly practical and straightforward choice for implementation while still providing robust multilevel stabilization. While the Greville quasi-interpolant does not possess proven optimal approximation estimates in the sense of Theorem~\ref{interpolation}, it shares the fundamental structural properties of the Lyche framework (locality, polynomial reproduction, projector property), and the numerical results below confirm that the theoretical stabilization mechanism remains fully effective with this practical choice.
\subsection{Test 1: One-dimensional advection-reaction problem}\label{sec:1d-advection-reaction}

We consider the one-dimensional advection-reaction problem:
\begin{equation} \label{eq:test1_adv_rea}
\begin{cases}
u'(x) + u(x) = f(x), & x \in \Omega = (0,1), \\[2mm]
u(0) = 0, &
\end{cases}
\end{equation}
where the forcing term is defined as
\[
f(x) =
\begin{cases}
5, & \text{for} \frac{1}{3} \leq x \leq \frac{2}{3}, \\
0, & \text{otherwise}.
\end{cases}
\]

This example allows us to investigate the performance of the Multilevel stabilized method for a problem with discontinuous source terms, where the exact solution is only piecewise smooth. For this test, we set the multilevel parameter to $L=4$.
\begin{figure}[H]
    \centering
    \begin{subfigure}[b]{0.6\textwidth}
        \centering
        \includegraphics[width=\textwidth]{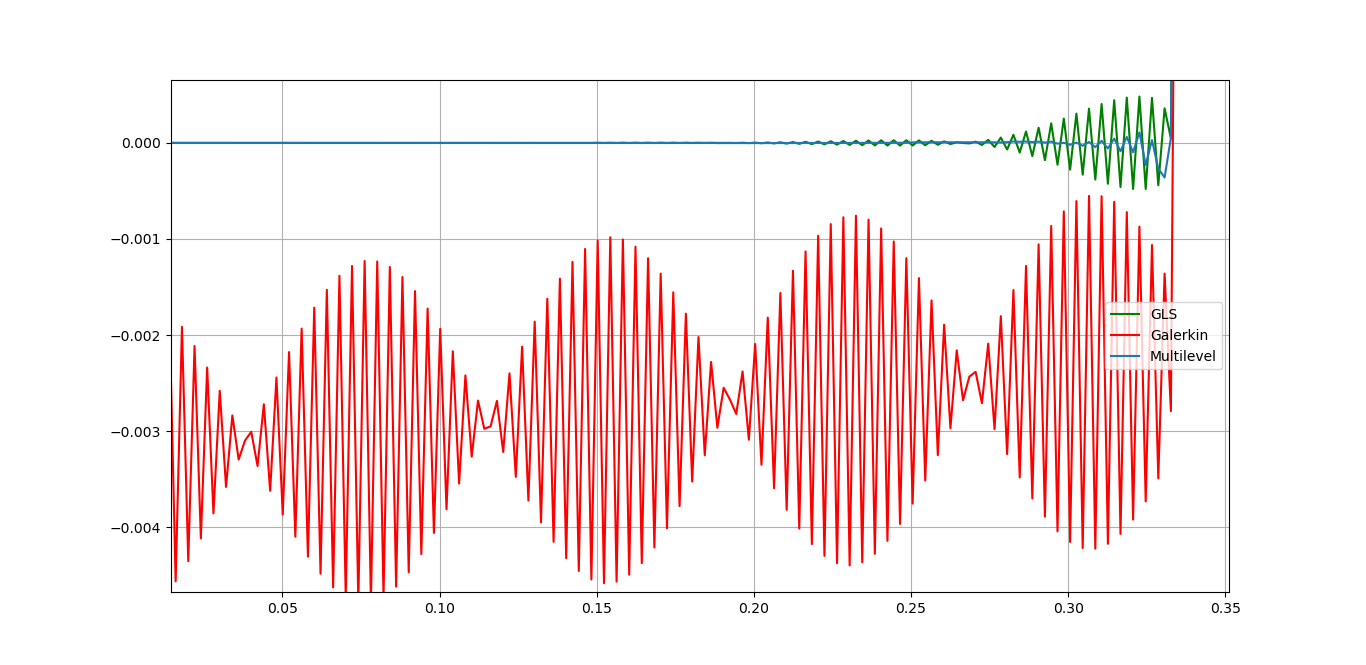} 
        \caption{Zoom on the interval $[0,1/3]$ showing the Multilevel solution compared with Galerkin and GLS solutions for $p=3$ and $n_e=512$.}
        \label{fig:zoom_adv_rea}
    \end{subfigure}
    \hfill
    \begin{subfigure}[b]{0.35\textwidth}
        \centering
        \includegraphics[width=\textwidth]{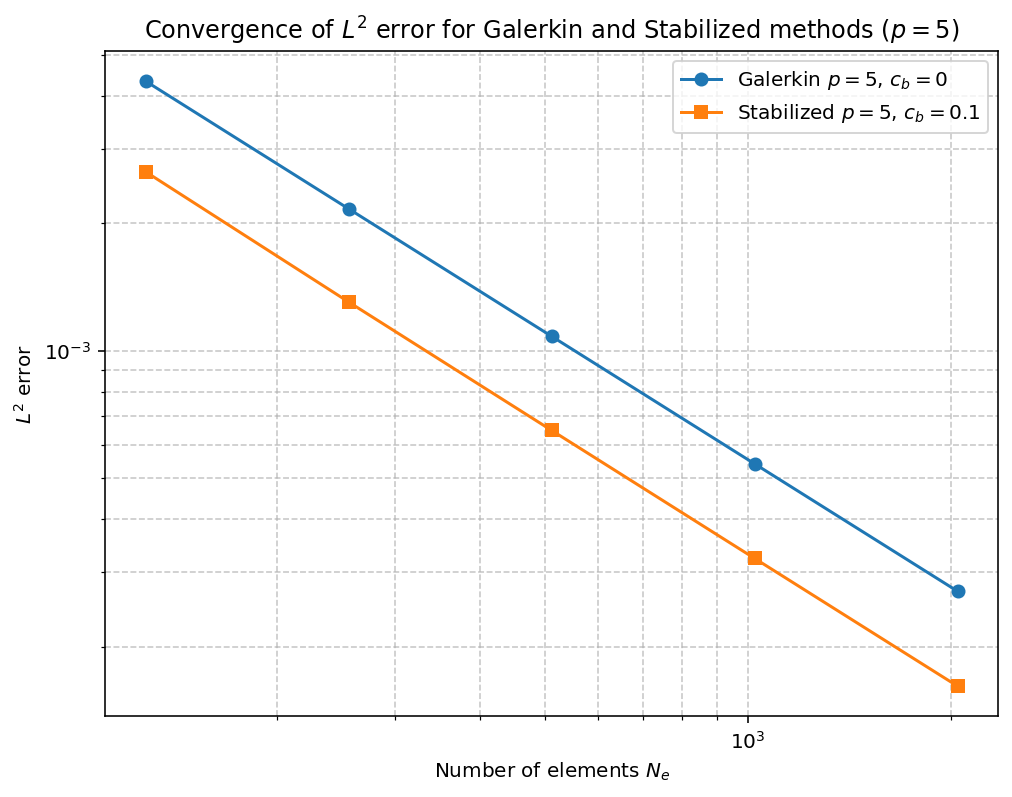} 
        \caption{Convergence history for polynomial order $p=5$.}
        \label{fig:conv_plot}
    \end{subfigure}
    
    \caption{Results for the 1D advection-reaction problem \eqref{eq:test1_adv_rea} (Test 1). (a) Local zoom comparing the Multilevel, Galerkin, and GLS solutions for $p=3$ and $n_e=512$. (b) $L^2$-error convergence history for polynomial degree $p=5$.}
    \label{fig:adv_rea_results}
\end{figure}
\subsection{Test 2: One-dimensional advection-diffusion problem} \label{1D_ADV}

We note that Tests 2, 5, and 6 explore degenerate regimes that include $\sigma_0 = 0$, extending beyond the strict coercivity assumption ($\sigma_0 > 0$) of the theoretical analysis.

We consider the one-dimensional linear advection-diffusion problem
\begin{equation}\label{pd0}
\begin{cases}
-\varepsilon u'' + \beta u' = 1, & x \in (0,1), \\[1mm]
u(0) = u(1) = 0.
\end{cases}
\end{equation}
For this numerical experiment, the multilevel parameter is set to $L=5$.

It is well known that the solution \(u\) of \eqref{pd0} can develop boundary layers when \(0 < \varepsilon \ll |\beta|\), i.e., in the advection-dominated regime. To evaluate the performance of our stabilization strategy under extreme conditions, we set \(\varepsilon = 10^{-5}\) and \(\beta = 1\).

Figure~\ref{Advec_Galerkin} shows the numerical solution obtained using the standard Galerkin discretization, while Figure~\ref{fig:adv_diff_results} depicts the solution computed with our stabilized method. It is clear that the Galerkin solution exhibits spurious oscillations throughout the domain, which are effectively suppressed in the stabilized solution. Minor oscillations may remain near the boundary layers due to steep solution gradients, but the overall accuracy and stability are significantly improved.
\begin{figure}[H]
\centering
\includegraphics[width=8cm]{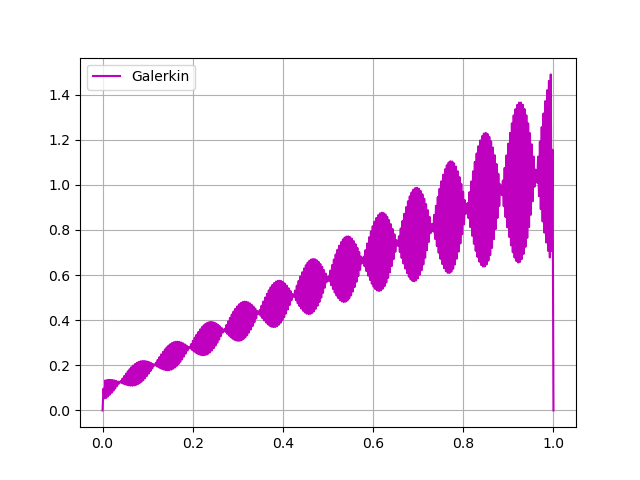}
\caption{Standard Galerkin solution for the 1D advection-diffusion problem \eqref{pd0} (Test 2) with $\varepsilon=10^{-5}$, $p=5$, and $n_e=512$.}
\label{Advec_Galerkin}
\end{figure}

\begin{figure}[H]
    \centering
    \begin{subfigure}[b]{0.45\textwidth}
        \centering
        \includegraphics[width=\textwidth]{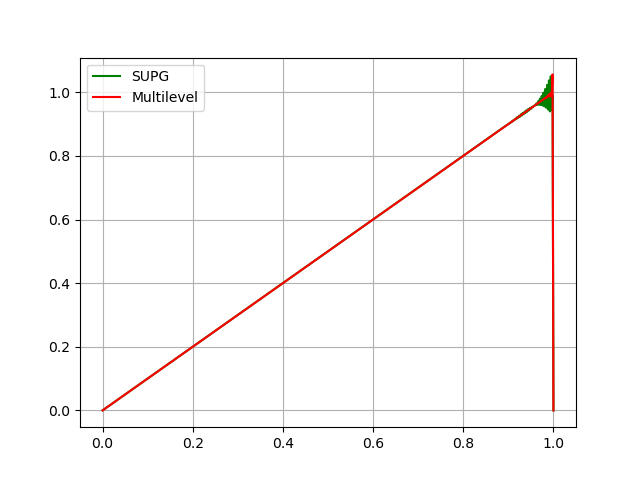} 
        \caption{Numerical solutions using SUPG and Multilevel stabilization.}
        \label{fig:adv_diff_full}
    \end{subfigure}
    \hfill
    \begin{subfigure}[b]{0.45\textwidth}
        \centering
        \includegraphics[width=\textwidth]{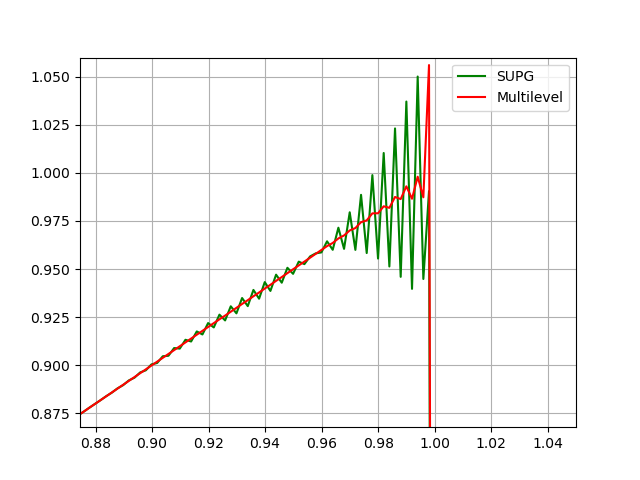} 
        \caption{Zoom near the boundary layer.}
        \label{fig:adv_diff_zoom}
    \end{subfigure}

    \caption{Comparison of stabilized solutions for the 1D advection-diffusion problem \eqref{pd0} (Test 2) using $p=5$ and $n_e=512$. (a) Numerical solutions over the full domain using SUPG and Multilevel stabilization. (b) Detailed zoom near the boundary layer ($x=1$).}
    \label{fig:adv_diff_results}
\end{figure}

\begin{figure}[H]
    \centering
    \includegraphics[width=0.45\textwidth]{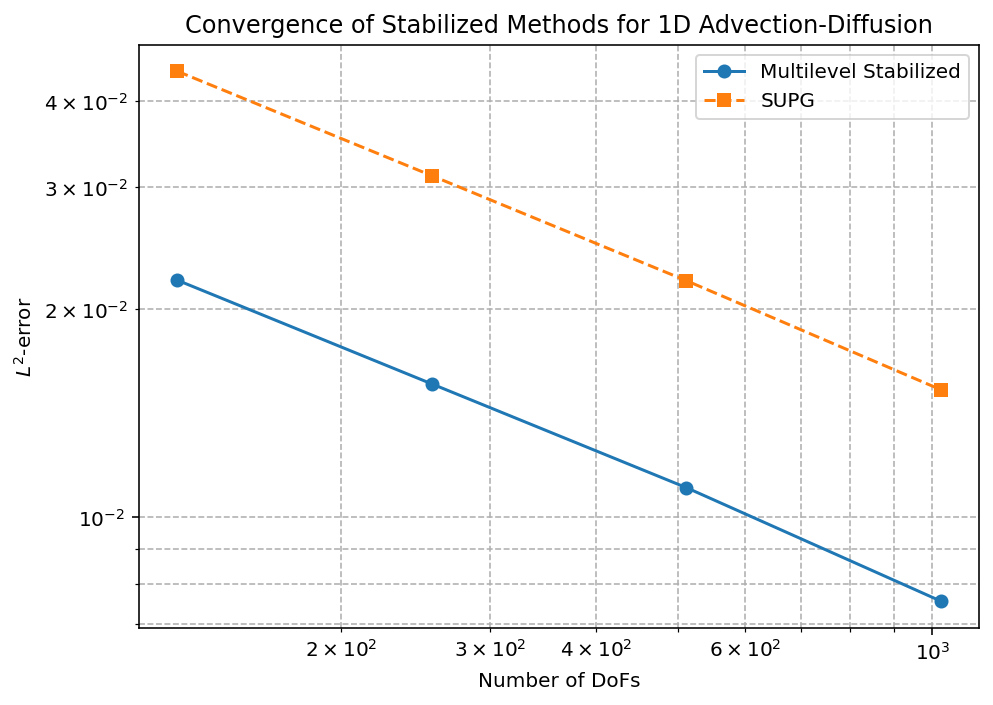}
    \caption{Relative $L^2$-error convergence plot for the 1D advection-diffusion problem \eqref{pd0} (Test 2) with polynomial degree $p=5$.}
    \label{Comp_multilevel}
\end{figure}

\begin{remark}

\label{remark:multilevel_oscillations}
As the number of levels increases, spurious oscillations are progressively reduced. 
This behavior is illustrated in Figure~\ref{fig:boundary_layer_zoom}, which shows the numerical solution near the boundary layer for different multilevel approximations in the absence of diffusion. 
The results indicate that the multilevel stabilization effectively damps high-frequency components of the solution, leading to a more stable approximation.
\begin{figure}[H]
    \centering
    \includegraphics[width=0.4\textwidth]{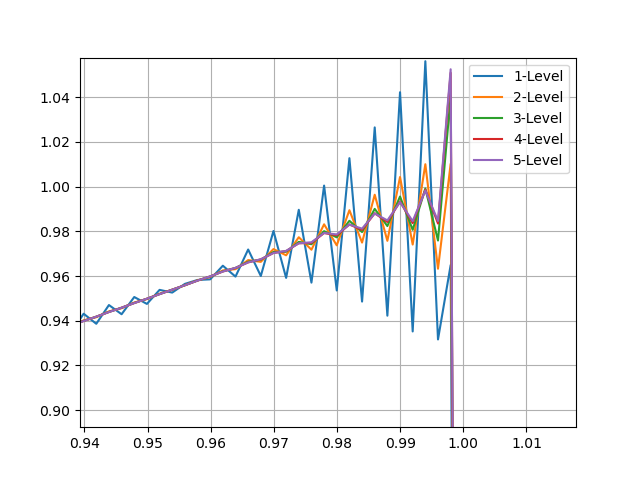}
    \caption{Zoom near the boundary layer showing solutions at different levels for $p=5$ and $n_e=512$.}
    \label{fig:boundary_layer_zoom}
\end{figure}    
\end{remark}

\begin{remark}[On the choice of the coarsest level $L$]
\label{remark:choice_of_L}
The number of levels $L$ controls the spectral breadth of the multilevel stabilization: each additional level penalizes a coarser band of unresolved scales, as confirmed by Remark~\ref{remark:multilevel_oscillations}. A natural guideline is to choose $L$ such that the coarsest mesh size $H^{(L)} = 2^L h$ remains comparable to the characteristic length scales of the data. Furthermore, this constraint on $H^{(L)}$ can be relaxed for higher polynomial degrees $p$. Because the method is weakly consistent, the enhanced approximation power of high-order splines ensures that the consistency error remains optimally bounded even on relatively coarser macro-elements. In all experiments reported here, $L \in \{3, 4, 5\}$ with $H^{(L)} < \text{diam}(\Omega)$, so that the coarsest quasi-interpolant $\Pi_L^p$ retains its full approximation order and the consistency error in Lemma~\ref{lem:consistency} is not degraded.
\end{remark}
\begin{remark}
\label{remark:condition_number}
Figure~\ref{Comp_cond} shows the condition number as a function of the number of degrees of freedom for the Galerkin, SUPG, and multilevel stabilized methods. 
The multilevel method exhibits a condition number that grows approximately linearly with respect to the number of degrees of freedom. 
In contrast, the standard Galerkin method leads to significantly larger condition numbers with a faster growth rate. 
The SUPG method also exhibits growth under refinement, but remains larger than that of the multilevel method. 

These results demonstrate that the proposed stabilization not only reduces oscillations but also improves the numerical conditioning of the discrete problem, thereby enhancing robustness and enabling the efficient use of high-degree B-splines.
\begin{figure}[H]
    \centering
    \includegraphics[width=0.4\textwidth]{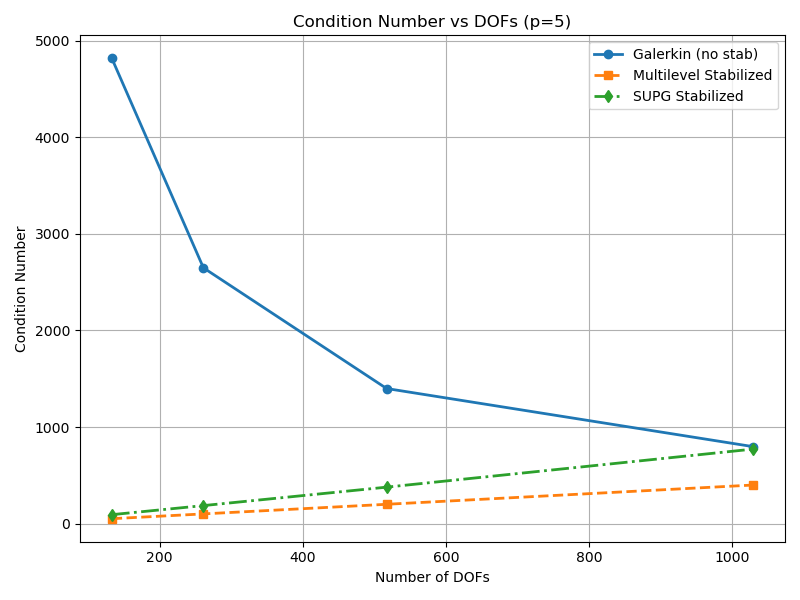}
    \caption{Condition number comparison for the 1D advection-diffusion problem \eqref{pd0} (Test 2) across Galerkin, SUPG, and Multilevel stabilized methods for polynomial degree $p=5$.}
    \label{Comp_cond}
\end{figure}   
\end{remark}

\subsection{Test 3: Problem with parabolic boundary layers}

In this example, we consider a convection--diffusion problem whose solution exhibits two parabolic boundary layers. The problem is characterized by a small diffusion parameter
\[
\varepsilon = 10^{-8},
\]
and a convection field defined by
\[
\boldsymbol{b}(x,y) = \bigl(0,\; 1 + x^2\bigr)^T,
\]
while the reaction coefficient is set to
\[
c = 0.
\]

Let $\Omega = (0,1)^2$ be the computational domain. The boundary $\partial \Omega$ is decomposed into a Neumann part and a Dirichlet part as follows:
\[
\Gamma_N := \{(x,y) \in \partial \Omega \;:\; 0 < x < 1,\; y = 1\}, 
\qquad 
\Gamma_D := \partial \Omega \setminus \Gamma_N.
\]

On the Neumann boundary $\Gamma_N$, homogeneous Neumann conditions are prescribed, i.e.,
\[
g_N = 0.
\]
On the Dirichlet boundary $\Gamma_D$, the boundary data is given by
\[
g_D(x,y) =
\begin{cases}
1, & \text{for } 0 \leq x \leq 1,\; y = 0, \\[6pt]
1 - y, & \text{otherwise}.
\end{cases}
\]

This configuration leads to the formation of parabolic boundary layers induced by the variable convection field, and serves as a relevant benchmark for assessing the robustness of stabilization methods. For the numerical experiments in this test, the multilevel parameter is set to $L=4$ when the polynomial degree is $p=3$, and $L=5$ when $p=5$.
\begin{figure}[H]
    \centering
    \includegraphics[width=1\textwidth]{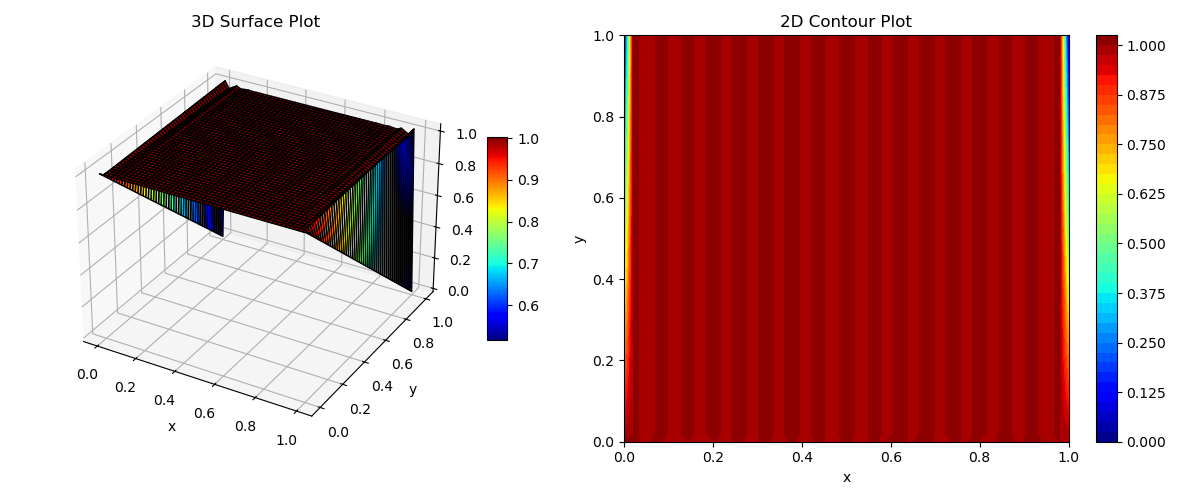}
    \caption{Standard Galerkin surface plot for the parabolic boundary layers problem \eqref{ADR_equa} (Test 3) with $p=5$ and $n_e=64 \times 64$.}
    \label{3D_parabolique_Galerkin}
\end{figure}  
\begin{figure}[H]
    \centering
    \includegraphics[width=1\textwidth]{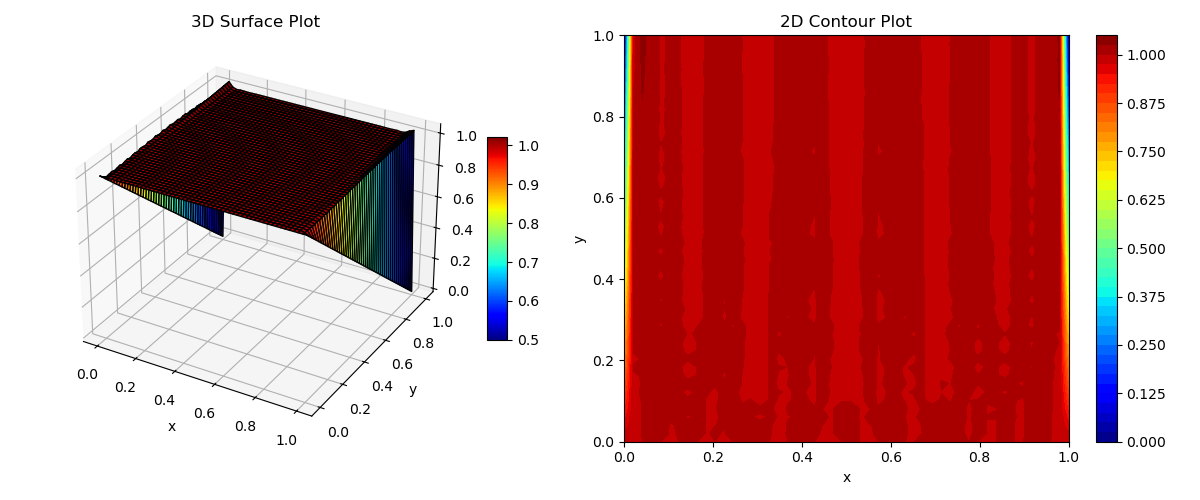}
    \caption{Multilevel stabilized surface plot for the parabolic boundary layers problem \eqref{ADR_equa} (Test 3) with $p=5$ and $n_e=64 \times 64$.}
    \label{3D_parabolique_Multilevel}
\end{figure}  
\begin{figure}[H]
    \centering
    \begin{subfigure}[b]{0.45\textwidth}
        \centering
        \includegraphics[width=\textwidth]{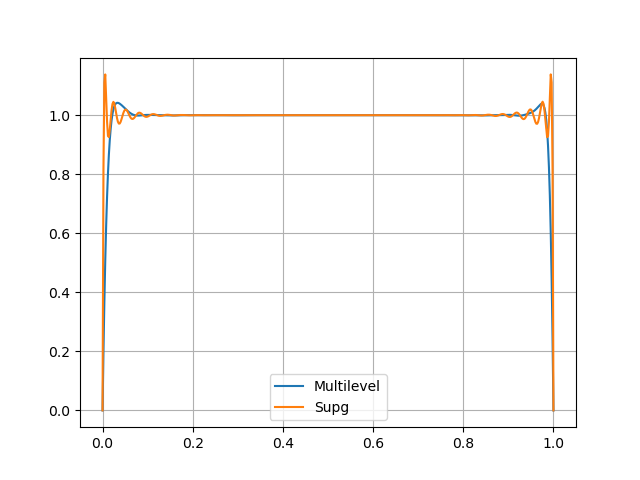} 
        \caption{Solution for $p=5$.}
        \label{fig:parabolic_layer_p5}
    \end{subfigure}
    \hfill
    \begin{subfigure}[b]{0.45\textwidth}
        \centering
        \includegraphics[width=\textwidth]{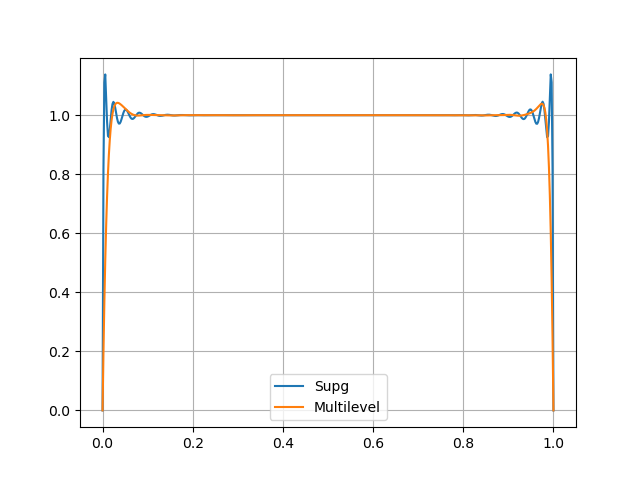} 
        \caption{Solution for $p=3$.}
        \label{parabolique_SUPG_Muiltilevel}
    \end{subfigure}

    \caption{Comparison of stabilized solutions for the parabolic boundary layers problem \eqref{ADR_equa} (Test 3) with $n_e=64 \times 64$. (a) Polynomial degree $p=5$. (b) Polynomial degree $p=3$.}
    \label{parabolique_layer_}
\end{figure}

These results (Figures~\ref{parabolique_layer_}, \ref{3D_parabolique_Multilevel}, \ref{3D_parabolique_Galerkin}) compare favorably with those reported in~\cite{matthies_stabilization_2008} for classical LPS methods, confirming the effectiveness of the proposed multilevel approach in the isogeometric setting.
\subsection{Test 4: Problem with two internal layers}
We consider problem \eqref{ADR_equa} with diffusion coefficient $\varepsilon = 10^{-5}$, convection field $\boldsymbol{b}(x,y) = (1,0)^T$, and homogeneous Dirichlet boundary conditions $u = 0$ on $\partial \Omega$. The source term is defined by
\[
f(x,y) =
\begin{cases}
16(1 - 2x), & \text{if } (x,y) \in [0.25, 0.75]^2, \\
0, & \text{otherwise}.
\end{cases}
\]

This test case, introduced in~\cite{john_diminishing_2007}, serves as a benchmark for convection-dominated problems in which linear and nonlinear discretizations often fail to provide satisfactory results. The exact solution is well approximated by the quadratic function $(4x - 1)(3 - 4x)$ within the region $[0.25, 0.75]^2$, while it remains close to zero (yet positive) elsewhere in $\Omega$. This function will be referred to as the \emph{reference solution} in the cross-sectional plots presented below.

Although the exact solution is non-negative, many nonlinear schemes yield numerical approximations that exhibit negative values in certain regions of the domain (see~\cite{john_spurious_2008} for a detailed discussion). This behavior does not contradict the discrete maximum principle (DMP), since the right-hand side $f$ changes sign inside $\Omega$, but it remains undesirable from both physical and numerical viewpoints.

Figure~\ref{3D_Example3} illustrates the surface plots obtained with different methods, while Figure~\ref{Cross_section_Example3} shows cross-sections along the lines $x = 0.5$. Along the line $x = 0.5$, SUPG and Galerkin display similar behavior.
\begin{figure}[H]
    \centering
    \begin{subfigure}[b]{0.45\textwidth}
        \centering
        \includegraphics[width=\textwidth]{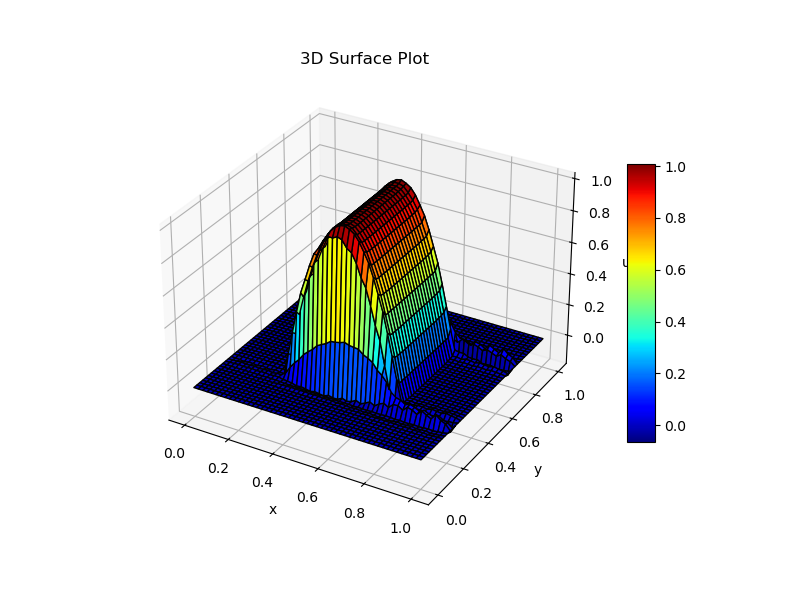} 
        \caption{Multilevel stabilized solution.}
        \label{3D_Example3_Multilevel_p=3}
    \end{subfigure}
    \hfill
    \begin{subfigure}[b]{0.45\textwidth}
        \centering
        \includegraphics[width=\textwidth]{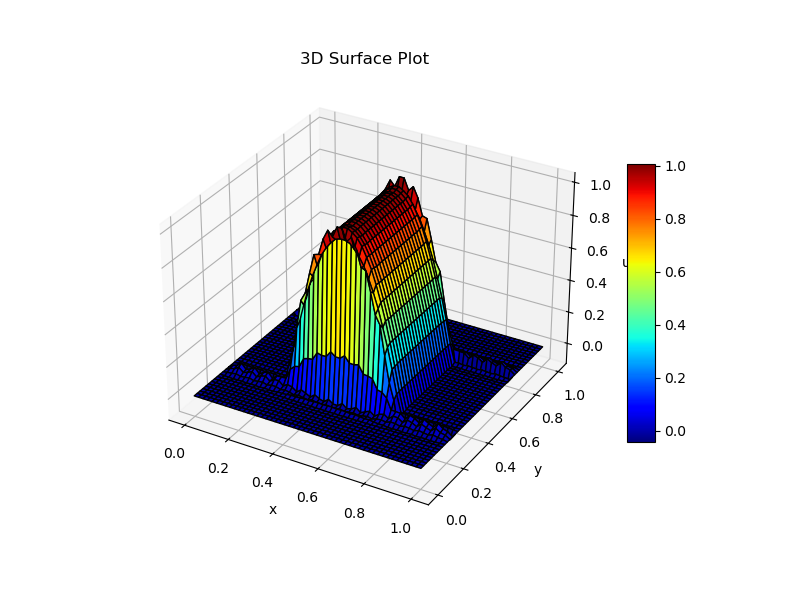} 
        \caption{Standard Galerkin solution.}
        \label{D_Example3_Galerkin_p=3}
    \end{subfigure}

    \caption{Surface plots of the numerical solutions for the two internal layers problem \eqref{ADR_equa} (Test 4) with $p=3$ and $n_e=64 \times 64$. (a) Multilevel stabilized solution. (b) Standard Galerkin solution.}
    \label{3D_Example3}
\end{figure}

\begin{figure}[H]
    \centering
    \begin{subfigure}[b]{0.45\textwidth}
        \centering
        \includegraphics[width=\textwidth]{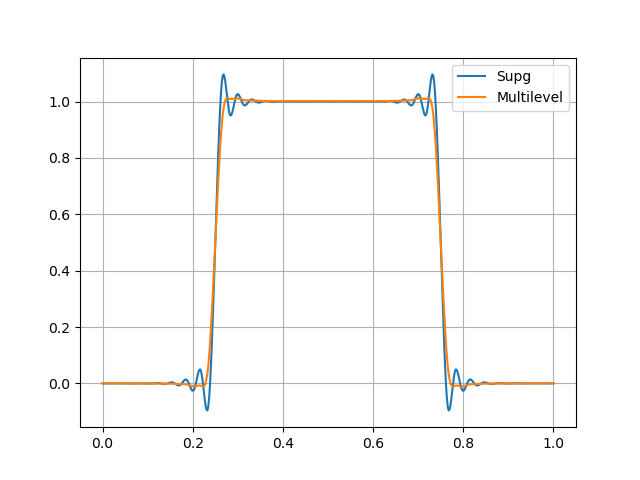} 
        \caption{Multilevel vs. SUPG.}
        \label{fig:cross_example3_multi_supg}
    \end{subfigure}
    \hfill
    \begin{subfigure}[b]{0.45\textwidth}
        \centering
        \includegraphics[width=\textwidth]{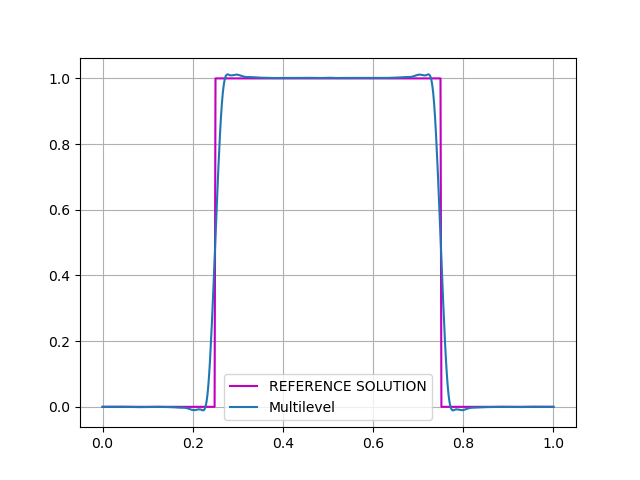} 
        \caption{Multilevel vs. reference solution.}
        \label{fig:cross_example3_multi_ref}
    \end{subfigure}

    \caption{Cross-sectional profiles at $x=0.5$ for the two internal layers problem \eqref{ADR_equa} (Test 4) with $p=3$, $n_e=64 \times 64$, and $L=5$. (a) Comparison between Multilevel and SUPG methods. (b) Comparison between Multilevel method and the reference solution.}
    \label{Cross_section_Example3}
\end{figure}
Finally, to make a more detailed analysis for this problem, we have repeated the study carried out in \cite{john_spurious_2008,barrenechea_blending_2017,john_diminishing_2007} for this example. More precisely, we have computed the quantities

\[
\min := -\min_{0.4 \leq x \leq 0.6} u_h(x, y), \quad 
\text{diff} := \max_{x \geq 0.8} u_h(x, y) - \min_{x \geq 0.8} u_h(x, y).
\]

Table~\ref{tab:example3_results} summarizes these indicators for both the standard SUPG method and the proposed Multilevel stabilization method for polynomial degrees $p=2,3,4,5$. The quantity \textit{min} measures the undershoot near the internal layers, while \textit{diff} reflects the variation of the solution. We observe that the Multilevel method consistently produces significantly smaller undershoots than SUPG for all degrees considered, with values of \textit{min} decreasing from 0.0316, 0.0283, 0.0254, to 0.0205 for $p=2,3,4,5$, respectively. This indicates improved monotonicity and enhanced stability near sharp internal layers. Notably, both the \textit{min} and \textit{diff} indicators are competitive with those reported for most nonlinear SOLD (Spurious Oscillations at Layers Diminishing) methods studied in~\cite{john_spurious_2008,john_diminishing_2007}. The key distinction is that the proposed Multilevel method achieves this level of performance as a \emph{fully linear} formulation, without requiring the nonlinear artificial viscosity terms and iterative solvers that SOLD methods rely upon. Therefore, the proposed Multilevel stabilization method maintains stability and physical consistency even for higher-order polynomial approximations, while preserving the computational simplicity of a linear scheme.
\begin{table}[H]
\centering
\caption{Comparison of stability indicators for Test 4 for different polynomial degrees $p$ ($n_e=64 \times 64$, $L=5$).}
\begin{tabular}{lccc}
\hline
Method & $p$ & min & diff \\
\hline
SUPG        & 2 & 0.0725 & 0.0332 \\
Multilevel  & 2 & 0.0316 & 0.1779 \\
\hline
SUPG        & 3 & 0.0590 & 0.2168 \\
Multilevel  & 3 & 0.0283 & 0.2906 \\
\hline
SUPG        & 4 & 0.0807 & 0.0393 \\
Multilevel  & 4 & 0.0254 & 0.2307 \\
\hline
SUPG        & 5 & 0.0464    & 0.2189    \\
Multilevel  & 5 & 0.0205 & 0.2788 \\
\hline
\end{tabular}
\label{tab:example3_results}
\end{table}

These results demonstrate that the proposed method achieves competitive or superior performance compared to the approaches reported in~\cite{barrenechea_blending_2017,john_spurious_2008}.
\subsection{Test 5: Rotational problem} 
We consider problem~\eqref{ADR_equa} on $\Omega=(0,1)^2$ with $\varepsilon = 10^{-7}$, convection field $\mathbf{b}(x,y)=(-y,x)$, reaction $c=0$, and source $f=0$. 
The inflow boundary condition is 
\[
u_b(x,0) = 
\begin{cases} 
1, & x \in \left(\frac{1}{3},\frac{2}{3}\right), \\[2mm]
0, & \text{otherwise.}
\end{cases}
\] 
At the outflow boundary $(0,1) \times \{1\}$, a homogeneous Neumann condition is imposed. 
The solution develops two interior layers originating from the discontinuities in the inflow profile at $y=0$. For this test, the multilevel parameter is $L=4$.
\begin{figure}[H]
    \centering
    \includegraphics[width=0.8\textwidth]{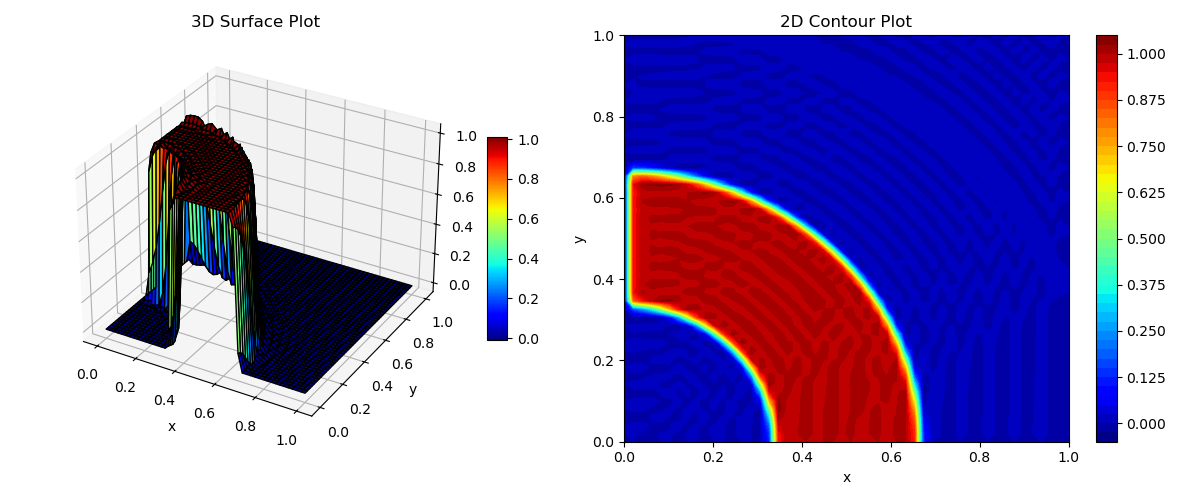}
    \caption{SUPG surface plot for the rotational problem \eqref{ADR_equa} (Test 5) with $p=3$ and $n_e=64 \times 64$.}
    \label{3D_Rotation_SUPG}
\end{figure}  
\begin{figure}[H]
    \centering
    \includegraphics[width=0.8\textwidth]{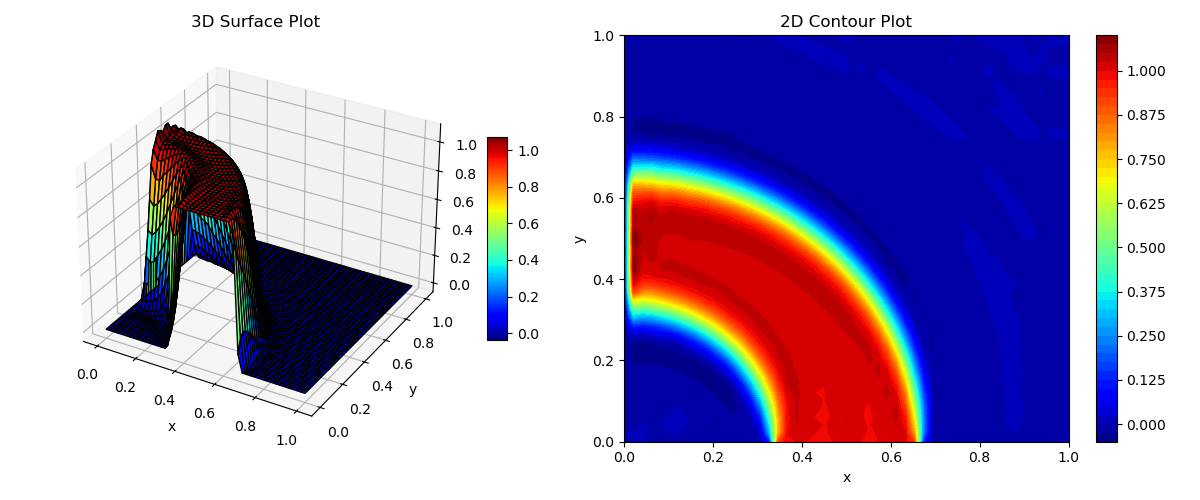}
    \caption{Multilevel stabilized surface plot for the rotational problem \eqref{ADR_equa} (Test 5) with $p=3$ and $n_e=64 \times 64$.}
    \label{3D_Rotation_Multilevel}
\end{figure}  
\subsection{Test 6: Two-dimensional pure advection problem}\label{sec:2d-pure-advection}

We consider the following two-dimensional pure advection problem:
\begin{equation}
\begin{cases}
\partial_{y} u = \frac{1}{2 \epsilon}\left(1 - \tanh^2\left(\frac{1}{\epsilon}(y-a)\right)\right), & \text{in } (0,1)^2, \vspace{0.2cm} \\
u_{\mid y=0} = 0, & \text{for } x \in (0,1),
\end{cases}
\label{eq:2d_pure_adv}
\end{equation}
with $\epsilon = 0.004$, whose exact solution is given by
\[
u(y) = \frac{1}{2}\left(\tanh\left(\frac{1}{\epsilon}(y-a)\right) + 1\right),
\]
where $a=\frac{2}{3}$.

The parameter $\epsilon$ controls the sharpness of the internal layer located at $y=a$. The value $\epsilon = 0.004$ adopted here produces a significantly sharper transition than the standard choice $\epsilon = 0.04$ commonly used in the literature~\cite{guermond_transport_1999,garg2023local}, thereby providing a more challenging benchmark for the stabilization method.
Figure~\ref{Pure_cross_section} shows a projection of the solution onto the plane \(x = 0\), comparing the exact solution with numerical approximations obtained using the standard Galerkin method and the proposed multilevel stabilization method.  
The results clearly illustrate the stabilizing effect of the multilevel approach, which effectively reduces spurious oscillations while accurately capturing the sharp layer at \(y = a\).
\begin{figure}[H]
    \centering
    \begin{subfigure}[b]{0.45\textwidth}
        \centering
        \includegraphics[width=\textwidth]{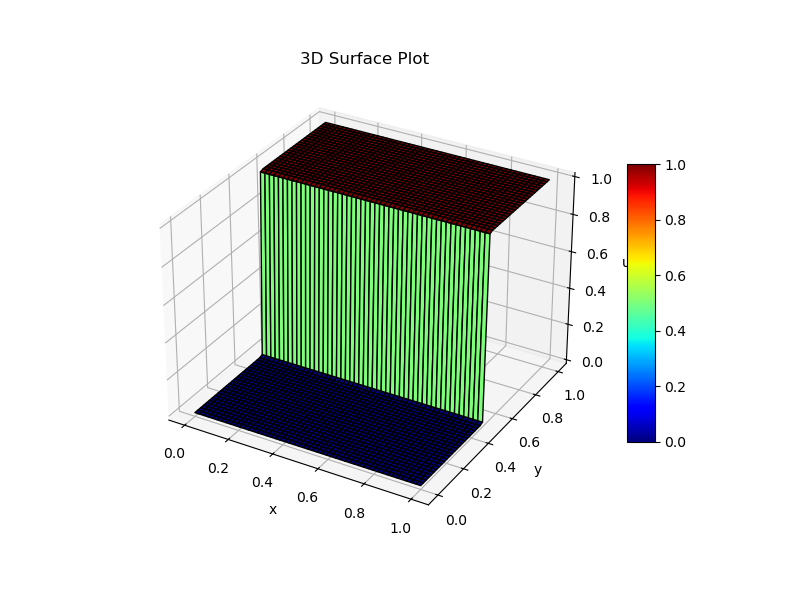} 
        \caption{Exact solution.}
        \label{3D_Pure_Multilevel_p=3}
    \end{subfigure}
    \hfill
    \begin{subfigure}[b]{0.45\textwidth}
        \centering
        \includegraphics[width=\textwidth]{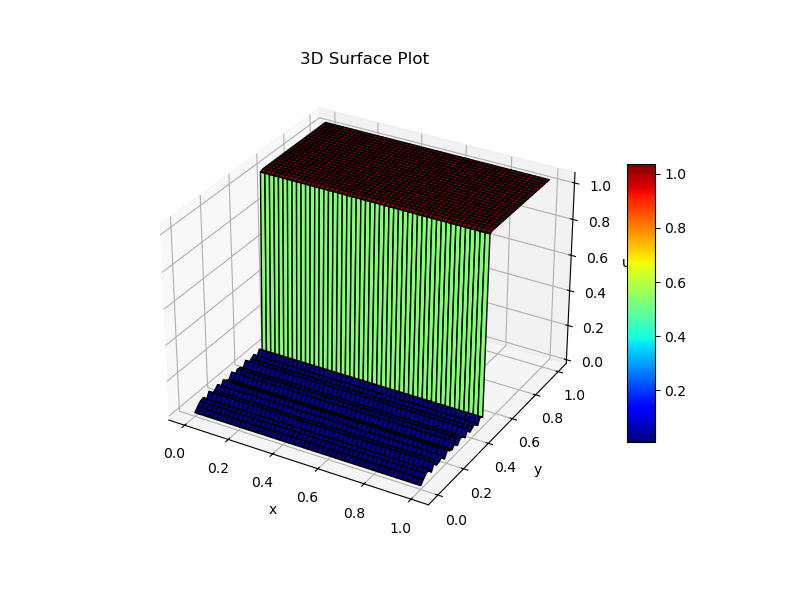} 
        \caption{Galerkin solution ($\mathbf{p}=(2,3)$).}
        \label{3D_pure_Galerkin_p=3}
    \end{subfigure}

    \caption{Surface plots for the 2D pure advection problem \eqref{eq:2d_pure_adv} (Test 6) using $n_e=(8,256)$. (a) Exact solution. (b) Standard Galerkin solution with polynomial degrees $\mathbf{p}=(2,3)$.}
    \label{3D_pure_advec}
\end{figure}
\begin{figure}[H]
    \centering
    \begin{subfigure}[b]{0.45\textwidth}
        \centering
        \includegraphics[width=\textwidth]{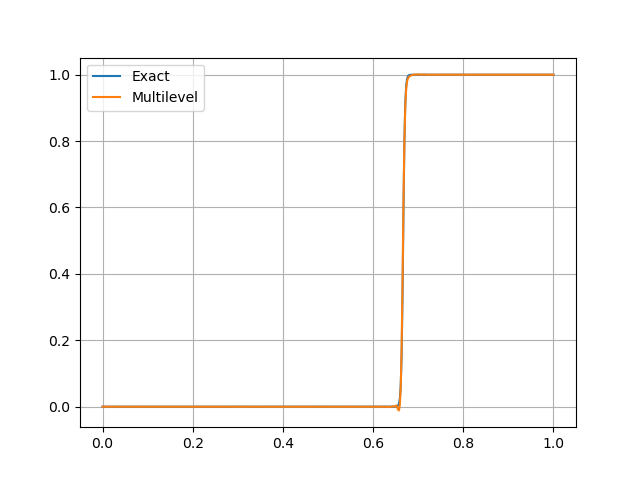} 
        \caption{Multilevel stabilized solution.}
        \label{Pure_Multilevel_p=5}
    \end{subfigure}
    \hfill
    \begin{subfigure}[b]{0.45\textwidth}
        \centering
        \includegraphics[width=\textwidth]{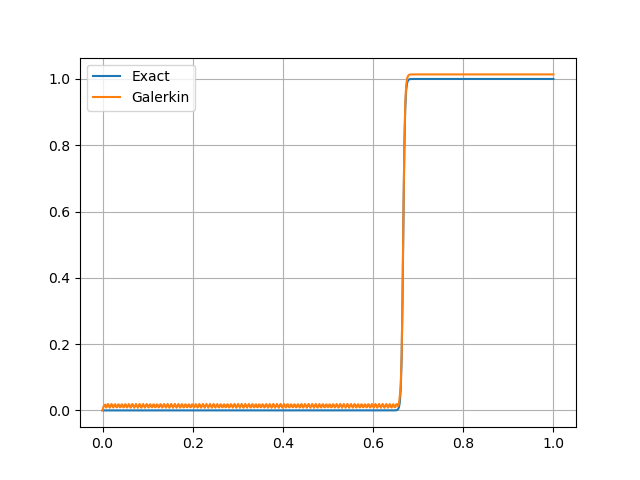} 
        \caption{Standard Galerkin solution.}
        \label{pure_Galerkin_p=5} 
    \end{subfigure}

    \caption{Cross-sectional profiles for the 2D pure advection problem \eqref{eq:2d_pure_adv} (Test 6) with polynomial degrees $\mathbf{p}=(2,5)$ and mesh size $n_e=(8,256)$. (a) Multilevel stabilized solution. (b) Standard Galerkin solution.}
    \label{Pure_cross_section}
\end{figure}

Table \ref{Table:p3_relative} presents the relative $L^2$ and $H^1$ errors with convergence rates for the 2D pure advection problem using both the standard Galerkin and the proposed Multilevel method with 
p=(2,3). The Multilevel method clearly outperforms Galerkin, achieving much lower errors even on coarse meshes and stable convergence on finer meshes. The $L^2$ error decreases smoothly, while the $H^1$ error remains reasonable despite the sharp layer in the y-direction. In contrast, Galerkin exhibits large errors and very slow convergence on coarse meshes. Note that the unusually high convergence rates observed between $n_{e,2}=256$ and $512$ (e.g., $r \approx 5.12$ for $p=3$) are characteristic of pre-asymptotic behavior. Because the internal layer is exceptionally sharp ($\epsilon = 0.004$), the coarser meshes severely under-resolve the gradient; once the mesh size $h$ becomes smaller than $\epsilon$, the layer is fully resolved, leading to a sudden drop in the error before it settles into the optimal asymptotic rate. Crucially, the Multilevel method's convergence rate is less inflated than Galerkin's ($5.12$ vs $6.29$) precisely because the proposed stabilization successfully suppressed the massive coarse-mesh errors that standard Galerkin suffers from. These results demonstrate the robustness and efficiency of the Multilevel stabilization for pure advection problems.
\begin{table}[H]
\caption{Relative convergence test with rates for $\mathbf{p}=(2,3)$ of 2D pure advection \eqref{eq:2d_pure_adv} by Galerkin and Multilevel method.} 
\label{Table:p3_relative}
\centering
\begin{tabular}{|c|c|c|c|c|c|c|c|c|}
\hline
$n_{e,2}$ & L$^2$-G & R-G & H$^1$-G & R-G & L$^2$-ML & R-ML & H$^1$-ML & R-ML \\
\hline
64  & 0.3920   & -    & 1.6015   & -    & 0.0455   & -    & 0.6222   & -    \\
128 & 0.1985   & 0.98 & 1.5507   & 0.05 & 0.0240   & 0.92 & 0.4992   & 0.32 \\
256 & 0.0314   & 2.66 & 0.4791   & 1.69 & 0.00734  & 1.71 & 0.1645   & 1.60 \\
512 & 0.000402 & 6.29 & 0.0127   & 5.23 & 0.000211 & 5.12 & 0.00808  & 4.35 \\
\hline
\end{tabular}
\end{table}
\begin{figure}[H]
    \centering
    \begin{subfigure}[b]{0.45\textwidth}
        \centering
        \includegraphics[width=\textwidth]{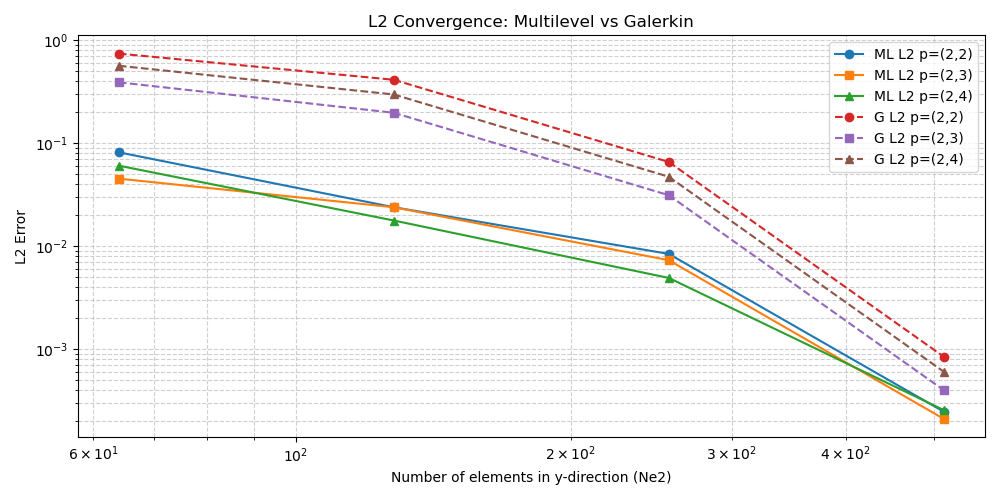} 
        \caption{$L^2$-norm convergence.}
        \label{Pure_L2_cv}
    \end{subfigure}
    \hfill
    \begin{subfigure}[b]{0.45\textwidth}
        \centering
        \includegraphics[width=\textwidth]{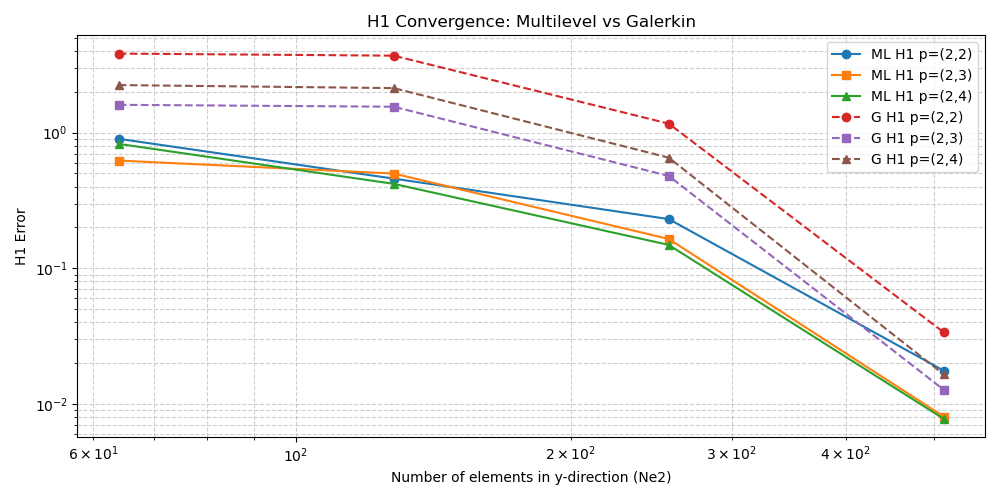} 
        \caption{$H^1$-norm convergence.}
        \label{Pure_H1_cv} 
    \end{subfigure}

\caption{Relative error convergence plots for the 2D pure advection problem \eqref{eq:2d_pure_adv} (Test 6) with different polynomial degrees. (a) Relative $L^2$-norm error. (b) Relative $H^1$-norm error.}
\label{Pure_cv_plots}
\end{figure}

As reported in~\cite{guermond_transport_1999}, the linear system associated with the standard Galerkin approximation is severely ill-conditioned, to the extent that the introduction of a small regularization term was necessary to obtain a stable solution. In contrast, the stabilized solutions do not require such a regularization term, and the linear systems arising from our method remain well conditioned, as illustrated in Figure~\ref{Condition_number_pure}.

\begin{figure}[H]
    \centering
    \includegraphics[width=0.50\textwidth]{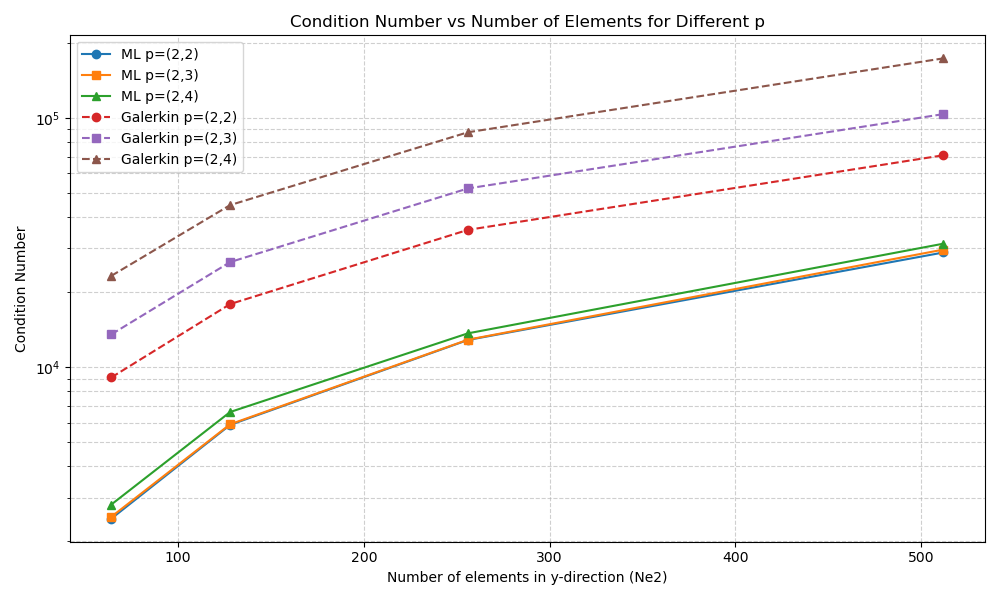}
    \caption{Condition number comparison for the pure advection problem across different polynomial degrees.}
    \label{Condition_number_pure}
\end{figure}
\section{Conclusions}

We have presented a novel Multilevel Isogeometric Quasi-Interpolant (MQ) projection method for the stabilization of steady advection-dominated and advection--reaction problems. The core mechanism of the method is the hierarchical penalization of fine-scale fluctuations using continuous B-spline quasi-interpolants across nested levels of the discrete space. By leveraging the $C^{p-1}$ continuity of B-splines, the proposed approach avoids both the discontinuous auxiliary spaces required by Local Projection Stabilization and the strong PDE residual evaluations required by SUPG and VMS methods. 

First, the analysis of the discrete method established the theoretical foundation by proving optimal \emph{a priori} error estimates in the MQ norm. A key theoretical contribution is the explicit capture of the multilevel hierarchy's effect through the weighted sum $\Sigma(p, L)$, which demonstrates that the stabilization is natively valid for arbitrary polynomial degrees $p$. Furthermore, we introduced the MQSD norm to establish a discrete inf-sup condition for the 1D setting under a numerically validated stability hypothesis, confirming that the method provides explicit control over the streamline derivative. While the rigorous proof is currently restricted to the 1D constant advection case, the extensive multidimensional numerical results provide strong evidence supporting its generalization; a complete functional analysis proof for the general setting remains an important open problem. 

Numerically, the method was evaluated on a stringent suite of benchmarks, including internal layers and parabolic boundary layers at high Péclet numbers. The multilevel projection consistently outperformed standard Galerkin and SUPG formulations, effectively suppressing spurious oscillations. Notably, despite being a fully linear formulation, the method demonstrated significant reduction of undershoots near sharp layers. In particular, the undershoot and outflow-variation indicators on the challenging two-internal-layers benchmark are competitive with those reported for most nonlinear SOLD methods~\cite{john_spurious_2008,john_diminishing_2007}, despite the fully linear character of the formulation.

While the present results are highly encouraging, several open challenges remain. First, a functional analysis proof of the discrete inf-sup condition for variable convection fields in multidimensions remains an open mathematical problem. Another important direction is generalizing these results to the multidimensional case in the context of general geometric mappings. Finally, extending this methodology to time-dependent transport phenomena, potentially through integration within a space-time isogeometric framework, and the incompressible Navier-Stokes equations constitutes a promising direction for future research.
\bibliographystyle{unsrt}  
\bibliography{Multilevel_ref}

\end{document}